\newtheorem{prop}{\sc Proposition}
\newtheorem{lem}{\sc Lemma}
\newtheorem{thm}{\sc Theorem}
\newtheorem{cor}{\sc Corollary}
\newtheorem{ex}{\sc Example}
\newtheorem{other}{\sc Theorem}              
\newenvironment{pf}{\noindent{\textit{Proof. }}}{$\Box$ }
\def\C{{\mathbb C}}
\def\D{{\mathbb D}}
\begin{document}

\title [Pre-Schwarzian and Schwarzian derivatives of harmonic mappings]{Pre-Schwarzian and Schwarzian derivatives of harmonic mappings}

\author[R. Hern\'andez]{Rodrigo Hern\'andez}
\address{Facultad de Ingenier\'{\i}a y Ciencias, Universidad Adolfo Ib\'a\~nez, Avda. Padre Hurtado 750,
Vi\~na del Mar, Chile.} \email{rodrigo.hernandez@uai.cl}

\author[M. J. Mart\'{\i}n]{Mar\'{\i}a J. Mart\'{\i}n}
\address{Departamento de Matem\'aticas (M\'odulo 17, Edificio de Ciencias),
Universidad Aut\'onoma de Ma\-drid, 28049 Madrid, Spain.}
\email{mariaj.martin@uam.es}
\urladdr{http://www.uam.es/mariaj.martin}

\thanks{The authors were partially supported by Fondecyt Grant \# 1110160 (Chile) and Grant MTM2009-14694-C02-01,
MICINN (Spain).
\endgraf  {\sl Key words:} Pre-Schwarzian derivative, Schwarzian derivative, harmonic mappings, univalence, Becker's criterion,
convexity.
\endgraf {\sl 2010 AMS Subject Classification}. Primary: 30C55;\,
Secondary: 31A05, 30C45.}

\begin{abstract}

In this paper we introduce a definition of the pre-Schwarzian
and the Schwarzian derivatives of \emph{any} locally univalent harmonic
mapping $f$ in the complex plane without assuming any additional
condition on the (second complex) dilatation $\omega_f$ of $f$. Using the new definition for the Schwarzian derivative of harmonic mappings, we prove analogous theorems to those by Chuaqui, Duren, and Osgood. Also, we obtain a Becker-type criterion for the
univalence of harmonic mappings.

\end{abstract}
\maketitle
\section{Introduction}
\label{intro}
\par
Let $f$ be a locally univalent analytic function defined on a simply
connected domain $\Omega\subset \C$. The \emph{pre-Schwarzian
derivative} of $f$ is defined by
\begin{equation}\label{eq-def-preSch-analytic}
Pf=\frac{f''}{f'}\,.
\end{equation}
\par
The \emph{Schwarzian derivative} of such a function $f$ equals
\begin{equation}\label{eq-def-Sch-analytic} Sf=\left(Pf\right)'-\frac
12\left(Pf\right)^2\,.
\end{equation}
\par
It is well known that $P(A\circ f)=Pf$ for all linear (or affine)
transformations $A(w)=aw+b$, $a\neq 0$. Also, that the Schwarzian
derivative is invariant under (non-constant) linear fractional (or
M\"{o}bius) transformations. In other words, $S(T\circ f)=S(f)$ for
every  function $T$ of the form
$$T(w)=\displaystyle\frac{aw+b}{cw+d},\quad ad-bc\neq 0\,.$$
\par\smallskip
This are just particular cases of the \emph{chain rule} for the pre-Schwarzian and Schwarz\-ian derivatives:
\begin{equation}\label{eq-chainrule}
P(g\circ f)= (Pg\circ f)f^\prime+Pf\quad and \quad S(g\circ f)=(Sg\circ f)(f')^2+Sf\,,
\end{equation}
respectively, which hold whenever the composition $g\circ f$ is defined.
\par
The pre-Schwarzian and the Schwarzian derivatives of locally
univalent analytic mappings $f$ are widely used tools in the study
of geometric properties of such functions. For instance, it can be
used to get either necessary or sufficient conditions for the global
univalence, or to obtain certain geometric conditions on the range
of $f$. More specifically, it is well known that any
\emph{univalent} analytic transformation $f$ in the unit disk $\D$
satisfies the sharp inequality
\[
|Pf(z)|\leq \frac{6}{1-|z|^2}\,, \quad z\in\D\,.
\]
Also, the \emph{Becker's univalence criterion} \cite{B} states that
if $f$ is locally univalent (and analytic) in $\D$ and
\[
\sup_{z\in\D} |zPf(z)|\, (1-|z|^2)\leq 1\,,
\]
then $f$ is univalent in the unit disk. Becker and Pommerenke \cite{BP} proved later that the constant $1$ is sharp.
\par
In terms of the Schwarzian derivative, we have the sharp inequality
\[
|Sf(z)|\leq \frac{6}{(1-|z|^2)^2} \quad (|z|<1)
\]
that holds for all univalent analytic functions $f$ in $\D$. This
result is due to Krauss \cite{Krauss} and was rediscovered by Nehari
\cite{Nehari} who also proved that if a locally univalent analytic
function $f$ in the unit disk satisfies
\begin{equation}\label{eq-Nehari}
|Sf(z)|\leq \frac{2}{(1-|z|^2)^2}\,, \quad |z|<1\,,
\end{equation}
then $f$ is univalent. The constant $C=2$ is sharp. Moreover, whenever the Schwarzian derivative of a function $f$ as above satisfies $|Sf(z)| (1-|z|^2)^2 \leq C$ in the unit disk for some constant $C<2$, then not only is $f$ univalent in $\D$ but it also maps the disk onto a Jordan domain on the Riemann sphere (see \cite{AW} and \cite{DL}).
\par\smallskip
It is possible to prove (\ref{eq-Nehari}) using the connection of the Schwarzian derivative with certain linear differential equation which indeed solves the problem of finding the most general analytic function with prescribed Schwarzian derivative: A function $f$ has Schwarzian derivative $Sf=2p$ if and only if it has the form $f=u_1/u_2$ for some pair of independent solutions $u_1$ and $u_2$ of the linear differential equation $u^{\prime\prime}+pu=0$. As a corollary, it is easy to obtain that if $Sf=Sg$ then $g=T\circ f$ for some linear fractional transformation $T$. In particular, the only analytic functions with $Sf=0$ are the M\"{o}bius transformations. There is still another consequence: any analytic function is the Schwarzian derivative of some locally univalent function $f$.
\par
For more properties and results related to the pre-Schwarzian and
Schwarzian derivatives of locally univalent analytic mappings, we
refer the reader to the monographs \cite{P}, \cite{Dur-Univ}, or
\cite{GK}.
\par\medskip
A definition of the Schwarzian derivative for a more general class of complex-valued \emph{harmonic} functions was presented by Chuaqui, Duren, and Osgood in \cite{CHDO-DefSchw}. The formula was derived by passing to the minimal surface associated locally with a given harmonic function and appealing to a definition given in \cite{OS} for the Schwarzian derivative of a conformal mapping between arbitrary Riemannian manifolds. Specifically, recall that a complex-valued harmonic function in a simply connected domain has a canonical representation $f=h+\overline g$ (where $h$ and $g$ are analytic functions) that is unique up to an additive constant. Any harmonic mapping  $f$ with $|h^\prime|+|g^\prime|\neq 0$ lifts to a mapping $\widetilde{f}$ onto a minimal surface defined by conformal parameters if and only if the dilatation $\omega=g^\prime/h^\prime$ equals the square of an analytic function $q$. In other words, $\omega=q^2$ for some analytic function $q$. For such mappings $f$, the Schwarzian derivative presented in \cite{CHDO-DefSchw} (that we denote by $\mathbb{S}$) is defined by the formula
\[
\mathbb{S}f=2(\sigma_{zz}-\sigma_z^2)\,,
\]
where $\sigma=\log(|h^\prime|+|g^\prime|)$ and $$\sigma_z=\frac{\partial \sigma}{\partial z}=\frac 12 \left(\frac{\partial \sigma}{\partial x}-i\frac{\partial \sigma}{\partial y}\right)\,,\quad z=x+iy\,.$$
\par
In terms of the canonical representation $f=h+\overline g$ and the dilatation $\omega=q^2$,
\begin{equation}\label{eq-oldSch}
\mathbb{S}f=Sh+\frac{2\overline q}{1+|q|^2}\left(q^{\prime\prime}-q^\prime \frac{h^{\prime\prime}}{h^\prime}\right)-4\left(\frac{q^\prime \overline q}{1+|q|^2}\right)^2\,,
\end{equation}
where $Sh$ is the classical Schwarzian derivative of the analytic function $h$. The Sch\-warz\-ian derivative $\mathbb{S}f$ is well-defined in a neighborhood of any point where $\omega$ has a zero of odd order. If $q$ has a zero of order at least $2$ at a point $z_0$, then $\mathbb{S} f$ tends to $Sh$ as $z\to z_0$ (see \cite{CHDO-DefSchw}). The Schwarzian derivative $\mathbb{S} f$ is well-defined for some harmonic mappings $f$ that are not locally univalent. For instance, as an extreme example of this fact, we can consider the harmonic mapping $f(z)=z+\overline z=2 Re\{z\}, z\in\D$. Since $|h^\prime|+|g^\prime|\neq 0$, we can define $\mathbb{S}f$. Indeed, it is easy to see that $\mathbb{S}f (z)\equiv 0$ in $\D$.
\par
Chuaqui, Duren, and Osgood, using their definition (\ref{eq-oldSch}) for the Schwarzian derivative of harmonic mappings, have obtained many interesting results related to different questions as some curvature properties, the univalence of the Weierstrass-Enneper lift, or a characterization of the uniform local univalence (with respect to the hyperbolic metric), among others (see \cite{Ch-D-O-04,Ch-D-O-05,Ch-D-O-07,Ch-D-O-07Phill,Ch-D-O-08}).
\par\smallskip
The Schwarzian derivative (\ref{eq-oldSch}) has one disadvantage
that arises from the fact that the dilatation is required to fulfill
an extra condition. This causes that in many cases one cannot define
(globally) the Schwarzian derivative of a univalent harmonic mapping
$f$ in a simply connected domain $\Omega$. For instance, although it
is known \cite{Ch-D-O-07Phill} that any univalent harmonic mapping
$f$ in the unit disk $\D$ with dilatation $\omega_f=q^2$ satisfies
\[
|\mathbb{S}f(z)|\leq \frac{C}{(1-|z|^2)^2}\,, \quad |z|<1\,,
\]
where $C$ is a constant with unknown sharp value, it is not possible
to state an analogous result for the hole family of harmonic
mappings in the unit disk in terms of $\mathbb{S}$.
\par
It should perhaps be pointed out that also, even in the case when
$f$ is a univalent harmonic mapping in $\Omega$ with dilatation
$\omega_f=q^2$, the Schwarzian derivative of the composition
$F=A\circ f$, where $A$ is an \emph{affine harmonic mapping} given
by $A(w)=a w+b\overline w+c$ ($a, b, c \in\C$), may fail to be
defined at some point $z_0\in\Omega$ because of the fact that the
dilatation $\omega_F$ of $F$ can have a simple zero at $z_0$.
Keeping in mind that the correspondence between the well-known
families of univalent harmonic mappings suitably normalized, $S_H$
(that is a normal family, but non-compact), and $S_H^0$ (normal and
compact) is realized via affine harmonic mappings (see
\cite[Sections 5.1 and 5.2]{Dur-Harm}), the fact that $\mathbb{S} F$
is not defined at $z_0$ results in the loss of normality argument
tools to solve certain extremal problems involving the Schwarzian
derivative (and related to univalent harmonic mappings).
\par\medskip
The main purpose of this paper is to present another definition of
the Schwarzian derivative (that will be denoted by $S_f$ to
differentiate it from the definitions of the Schwarzian derivatives
already mentioned) for \emph{any} locally univalent harmonic mapping
$f$ in a simply connected domain in the complex plane. We will
justify the new definition in two different analytic ways: the first
one consists in applying the interpretation of the classical
Schwarzian derivative (\ref{eq-def-Sch-analytic}) given by Tamanoi
in \cite{T} to harmonic M\"obius transformations. The second one is
based on a relation between $S_f$ and the classical formula
(\ref{eq-def-Sch-analytic}) for locally univalent analytic functions
 (in terms of the Jacobian). This relation will be used to define the \emph{pre-Schwarzian derivative}
 $P_f$ of a locally univalent harmonic mapping.
\par
From the geometrical point of view, we will see that $S_f$ equals
the Schwarzian tensor of certain conformal metric relative to the
Euclidean metric (just like $\mathbb{S}$).
\par\smallskip
We will prove that not only $S_f$ satisfies all the properties that
$\mathbb{S} f$ does (characterization of the harmonic M\"{o}bius
transformations, chain rule, \ldots), but also others that seem to
be natural in the context of the theory of harmonic mappings. For
instance, that $S_{(A\circ f)}=S_f$ for any affine harmonic mapping
$A$ as above and any locally univalent harmonic mapping $f$ in a
simply connected domain. Moreover, although $\mathbb{S} f$ and $S_f$
are different (in the cases when $\mathbb{S} f$ is defined), the
quantities
\[
\|\mathbb{S} f\|=\sup_{|z|<1}|(\mathbb{S} f)(z)| (1-|z|^2)^2\quad \emph{and} \quad \|S_f\|=\sup_{|z|<1}|S_f(z)| (1-|z|^2)^2
\]
are simultaneously finite or infinite. Hence, we are able to prove the theorems by Chuaqui, Duren, and Osgood involving $\|\mathbb{S} f\|$ and related to uniform local univalence in terms of the norm $\|S_f\|$ of the ``new'' Schwarzian derivative (in many cases, using the same arguments as them). The statements of the corresponding theorems in this paper hold for the full class of locally univalent harmonic mappings in the unit disk with no extra assumption on the dilatation.
\par
We will also prove other additional results. For instance, we will
show a criterion of univalence for sense-preserving harmonic
mappings that generalizes the classical Becker's criterion.
\par\medskip
In terms of the canonical representation of a sense-preserving harmonic mapping $f=h+\overline g$ in a simply connected domain $\Omega$ with dilatation $\omega=g^\prime/h^\prime$, we define $S_f$ (in $\Omega)$ by
\[
S_f=Sh+\frac{\overline \omega}{1-|\omega|^2}\left(\frac{h''}{h'}\,\omega'-\omega''\right)
-\frac 32\left(\frac{\omega'\,\overline
\omega}{1-|\omega|^2}\right)^2\,.
\]


\section{Background}\label{sec-background}


\subsection{Harmonic mappings in the complex plane}\label{ssec-harmonicmaps}
A straightforward calculation in terms of the well-known Wirtinger operators
$$\frac{\partial}{\partial z}=\frac 12\left(\frac{\partial }{\partial x}-i\frac{\partial}{\partial y}\right)\,,\quad\quad \frac{\partial}{\partial \overline z}=\frac 12\left(\frac{\partial }{\partial x}+i\frac{\partial}{\partial y}\right)\,,\quad z=x+iy\,,$$
shows that the Laplacian of $f$ is
\begin{equation}\label{eq-laplacian}
\triangle f=4\frac{\partial^2 f}{\partial z\partial \overline z}\,.
\end{equation}
Therefore, for functions $f$ with continuous second partial derivatives, we have that $f$ is harmonic if and only if $\partial f/\partial z$ is analytic. This fact proves that a complex-valued harmonic function $f$ in a simply connected domain $\Omega\in\C$ has the representation $f=h+\overline
g$ (where $h$ and $g$ are analytic functions in $\Omega$) that is unique up to an additive constant. For a harmonic mapping $f$ of the unit disk $\D$, it is convenient to choose the additive constant so that $g(0)=0$. The representation $f=h+\overline
g$ is therefore unique and is called the \emph{canonical representation} of $f$.
\par\smallskip
It is a consequence of the inverse mapping theorem that if the Jacobian of a $C^1$ mapping $f:\mathbb{C}\to \mathbb{C}$ does not vanish, then $f$ is locally univalent. A theorem of Lewy \cite{Lewy} asserts that the converse is also true for harmonic mappings. Specifically, a harmonic mapping $f=h+\overline g$ in a simply connected domain $\Omega$ is locally univalent if and only if its Jacobian, $J_f=|h^\prime|^2-|g^\prime|^2$, does not vanish in $\Omega$. By Lewy's theorem, harmonic mappings are either \emph{sense-preserving} or \emph{sense-reversing} depending on the conditions  $J_f>0$  and  $J_f<0$ throughout the domain $\Omega$ where $f$ is locally univalent, respectively. If $f$ is sense-preserving, then $\overline f$ is sense-reversing. Locally univalent analytic functions are sense-preserving. Notice that if $f=h+\overline g$ is sense-preserving, its analytic part $h$ is locally univalent: $h^\prime\neq 0$ in $\Omega$, and the second complex dilatation of $f$, $\omega_f=g^\prime/h^\prime$, is an analytic function in $\Omega$ with $|\omega_f|<1$.
\subsection{Univalent harmonic mappings in $\D$. The shear construction}\label{ssec-shear}
A harmonic mapping $f$ in $\Omega$ is \emph{univalent} if
$f(z_1)\neq f(z_2)$ for all $z_1,\ z_2\in\Omega$ with $z_1\neq z_2$.
Let $f$ be a univalent harmonic mapping in the unit disk and
$f=h+\overline g$ its canonical representation (with $g(0)=0$). If
$f$ is sense-preserving, then $h$ is locally univalent, so there is
no loss of generality in assuming that $h(0)=1-h^\prime(0)=0$. The
class of all sense-preserving univalent harmonic mappings in the
unit disk with the normalizations $g(0)=h(0)=1-h^\prime(0)=0$ is
denoted by $S_H$. This class $S_H$ is not compact (see \cite[p.
78]{Dur-Harm}). To produce a compact normal family it is enough to
use one further normalization: the family $S_H^0=\{f\in S_H \colon
g^\prime(0)=0\}$ is compact. Note that if $f=h+\overline g\in S_H$
and $g^\prime(0)=b_1\in\D$, the composition $f_0=\tau\circ f$, where
\[
\tau(w)=\frac{w-\overline{b_1} \overline w}{1-|b_1|^2},
\]
belongs to $S_H^0$. Conversely, given any value of $b_1\in\D$, there
is a unique function $f=f_0+\overline{b_1 f_0}\in S_H$ with
$g^\prime(0)=b_1$ that corresponds to a given function $f_0\in
S_H^0$. In other words, the correspondence between functions in the
families $S_H$ and $S_H^0$ is realized using affine harmonic
mappings of the form $A(w)=a w+b\overline w$ with $a\neq 0$ and $|b|<|a|$.
\par\smallskip
An effective way of constructing sense-preserving univalent harmonic
mappings in $\D$ is the so called \emph{shear construction}
introduced in the paper \cite{CSS} by Clunie and Sheil-Small. Recall
that a function $f$ is convex in the $\theta$ direction ($0\leq
\theta<2\pi$) if the intersection of the domain $f(\D)$ with every
line parallel to the line through $0$ and $e^{i\theta}$ is either
the empty set or an interval. In \cite{CSS}, it was proved that
given a locally univalent harmonic mapping $f=h+\overline g$ in the
unit disk, then $f$ is univalent and convex in the $\theta$
direction if and only if the analytic function $h-e^{2i\theta} g$ is
univalent and convex in the $\theta$ direction. This result was used
in \cite{CSS} to construct two explicit examples of univalent
harmonic mappings in $\D$: the \emph{harmonic Koebe function} and
the \emph{half-plane harmonic mapping}. Many other examples of
univalent harmonic mappings constructed using the shear construction
are shown in \cite[Section 3.4]{Dur-Harm}. We review some of them.
\par\smallskip
The \emph{harmonic Koebe function} is defined by $K=h+\overline g$ where
\begin{eqnarray}\label{eq-Koebe-harmonic}
h(z)=\frac{z-\frac 12 z^2+\frac 16 z^3}{(1-z)^3},\quad g(z)=\frac{\frac 12 z^2+\frac 16 z^3}{(1-z)^3} \quad (z\in\D)\,.
\end{eqnarray}
This function $K$ maps $\D$ onto the full plane minus the part of the negative real axis from $-1/6$ to infinity and can be obtained as the horizontal shear (that is, the shear in the $0$ direction) of the Koebe mapping $k(z)=z/(1-z)^2$ with dilatation $\omega(z)=z$. For many reasons, it is thought that the harmonic Koebe mapping $K$ is the probable analogue of the Koebe function $k$ for the class $S_H^0$ of harmonic univalent functions.
\par\smallskip
The \emph{half-plane harmonic mapping} $L=h+\overline g$ was defined in \cite{CSS} as the vertical shear (the shear in the $\pi/2$ direction) of the function $\ell (z)=z/(1-z)$ with dilatation $\omega(z)=-z$. The function $L$ maps the disk onto the half-plane $\{z\in\C\colon Re\{z\}>-1/2\}$. The explicit formulas for $h$ and $g$ are
\begin{eqnarray}\label{eq-formulaL}
h(z)=\frac{2z-z^2}{2(1-z)^2},\quad g(z)=\frac{-z^2}{2(1-z)^2}\,.
\end{eqnarray}
\par\smallskip
Two other univalent harmonic mappings will be important for our purposes: The \emph{half-strip harmonic mapping} $S_1$ and the \emph{strip harmonic mapping} $S_2$. Both functions are horizontal shears of the conformal mapping
\begin{equation}\label{eq-strip-analytic}
s(z)=\frac 12 \log\frac{1+z}{1-z}\,
\end{equation}
that maps $\D$ onto the domain $\Omega=\{z\in\C\colon |Im\{z\}|<\pi/4\}$.
The mapping $S_1$ has dilatation $w(z)=z$. The dilatation of $S_2$ is $w(z)=z^2$. Concretely, it can be seen that
\begin{eqnarray}\label{eq-formulaS1}
S_1=h_1+\overline{g_1}, \quad \emph{where}\quad h_1=\frac 12 (\ell+s) \quad \emph{and }\quad g_1(z)=\frac 12 (\ell-s)\,
\end{eqnarray}
in the unit disk. Moreover,
\[
S_1(\D)=\left\{w\in\C\colon Re\{w\}>-\frac 12,\quad |Im\{w\}|<\frac \pi 4\right\}\,.
\]
The harmonic mapping $S_2=h_2+\overline g_2$ is defined by
\begin{eqnarray}\label{eq-formulaS2}
\quad h_2=\frac 12 (q+s) \ \  \emph{and}\ \  g_2(z)=\frac 12 (q-s), \
\end{eqnarray}
where $q(z)=\sqrt{k(z^2)}=z/(1-z^2)$,  $|z|<1$. Note that $S_2(\D)=s(\D)$.
\par
Each function $L$, $S_1$, and $S_2$ maps $\D$ onto a convex domain.


\section{Two Analytic derivations of the formula for the Schwarzian derivative of locally univalent harmonic mappings}\label{sec-definitionSchwarzian}

\par
In this section, we derive the formula for the Schwarzian derivative $S_f$ mentioned in the introduction in two different analytic ways. We also see in Subsection~\ref{ssec-tensor} that $S_f$ equals the Schwarzian tensor of certain conformal metric (relative to the Euclidean metric).
\subsection{Tamanoi's interpretation of the classical Schwarzian derivative}\label{ssec-Tamanoi}
In \cite{T}, Tamanoi studies M\"{o}bius invariant differential operators which arise in
the context of approximating locally univalent analytic functions  at the origin by M\"{o}bius transformations. In particular, the classical Schwarz\-ian derivative. In this section, we review some of the arguments developed in \cite{T} and use them to propose our new definition of the Schwarzian derivative for locally univalent harmonic mappings.
\par\smallskip
For a locally univalent analytic function $f$ in the simply connected domain $\Omega$ with $0\in\Omega$, Tamanoi defined the \emph{best M\"{o}bius approximation of $f$ at the origin} as the linear fractional transformation
\[
T_f(z) =\frac{az + b}{cz + d},\quad ad-bc\neq 0\,,
\]
which approximates $f(z)$ at $z=0$ up to the second order: $T_f(0)=f(0)$, $T^\prime_f(0) =f^\prime(0)$, and $T^{\prime\prime}_f(0)=f^{\prime\prime}(0)$. Since the family of M\"{o}bius transformations is the complex $3$-dimensional group $PSL_2(\C)$, the three conditions given above determine $T_f$ uniquely.
\par The \emph{deviation of a locally univalent analytic function $f$ from their best M\"{o}bius transformation} is the function $F_f=T_f^{-1}\circ f$. Let the Taylor expansion of $F_f$ at the origin be
\[
F_f(z)=\sum_{n=0}^\infty
s_n\frac{z^{n}}{n!}=z+\frac{1}{3!} Sf(0)\, z^3+\ldots\,,
\]
where $s_n=s_n(f)$ are the Taylor coefficients of $F_f$ and $S$ is the classical Schwarzian derivative. The Taylor coefficients $s_n$ are invariant under pre-composition with M\"{o}bius transformations (see \cite[Lemma~3-1]{T}). In other words, $s_n(T\circ f)=s_n(f)$ for all (non-constant) M\"{o}bius transformations $T$ and each non-negative integer $n$. Note that $SF_f(0)=6\left(s_3-{s_2}^2\right)=Sf(0)$ since, in this case, $s_2=0$.
\par
For a point $w\in\Omega\setminus\{0\}$, consider the analytic function $f_w=f(w+z)$ which is well defined in the domain $\Omega_w=\{z\in\C\colon z-w\in\Omega\}$ (that contains the origin). Let $T_{f_w}$ be the best M\"{o}bius approximation of $f_w$ at $z=0$. It can be checked (see \cite[p.~136]{T}) that
\begin{eqnarray*}
F_{f_w}(z)=T_{f_w}^{-1}\circ f_w(z)=\sum_{n=0}^\infty s_n(f)(w)\frac{z^{n}}{(n)!}= z+\frac{1}{3!} Sf(w)\, z^3+\ldots
\end{eqnarray*}
Therefore,
\[
Sf(w)=SF_f(w)=6\left(s_3(f)(w)-(s_2(f)(w))^2\right)\,.
\]
\par
We will follow the same argument as before to derive a formula for the Schwarzian derivative of a sense-preserving harmonic mapping in the complex plane.
\par\medskip
A \emph{sense-preserving harmonic M\"{o}bius transformation $M$} is a transformation of the form
\[
M=T+\alpha\overline T\,,
\]
where $T$ is a (classical) non-constant M\"{o}bius transformation and $\alpha$ is a constant with $|\alpha|<1$. Let $f=h+\overline g$ be a sense-preserving harmonic mapping in a simply connected domain $\Omega\subset \C$ that contains the origin. We define \emph{best harmonic M\"{o}bius approximation of $f$ at the origin} as the harmonic M\"{o}bius transformation $M_f(z)$ that satisfies $M_f(0)=f(0),$
\[
\quad \frac{\partial M_f}{\partial z}(0)=\frac{\partial f}{\partial z}(0)=h'(0),\quad \frac{\partial M_f}{\partial\overline z}(0)=\frac{\partial f}{\partial\overline z}(0)=\overline{g'(0)}\,,
\]
and
\[
\frac{\partial^2 M_f}{\partial z^2}(0)=\frac{\partial^2 f}{\partial z^2}(0)=h''(0)\,.
\]
Notice that these four conditions determine $M_f$ uniquely. Moreover, recalling the connection between the derivatives $\partial/\partial z$ and $\partial/\partial \overline z$ given by
\[
\overline{\left(\frac{\partial f}{\partial z}\right)}=\frac{\partial{\overline f}}{\partial \overline z}\,,
\]
it is easy to check that
\[
\frac{\partial^2 M_f}{\partial z^2}(0)=\frac{\partial^2 f}{\partial z^2}(0)=h''(0) \quad \emph{if and only if}\quad \frac{\partial^2 M_f}{\partial
\overline{z}^2}(0)=\frac{\partial^2 f}{\partial \overline{z}^2}(0)=\overline{g''(0)}\,.
\]
Note also that, being $M_f$ and $f$ harmonic mappings, the equations $$\frac{\partial^2 M_f}{\partial z\partial\overline z}(0)=\frac{\partial^2 f}{\partial z\partial\overline z}(0)=0$$ always hold.
\par
We define the \emph{deviation of a sense-preserving harmonic function $f$ from their best harmonic M\"{o}bius transformation} as the function $F_f=(M_f)^{-1}\circ f$.  The Taylor expansion of $F$ in terms of $z$ and $\overline z$ has the form
\begin{eqnarray*}\label{eq-Taylordevelopment}
F_f(z) & = & z-\frac{1}{2!}\left( \frac{\overline{\omega(0)}\omega'(0)}{1-|\omega(0)|^2}\right)\,z^2+\frac{1}{2!}\left(  \frac{\overline{h'(0)\omega'(0)}}{h'(0)(1-|\omega(0)|^2)}\right)\overline z^2\\
&+& \nonumber
\frac{1}{3!}\left(Sh(0)+\frac{\overline{\omega(0)}}{1-|\omega(0)|^2}\left(\omega'(0)\frac{h''}{h'}(0)
-\omega''(0)\right)\right) z^3\\
&-&\nonumber
\frac{1}{3!}\left(\frac{h''(0)\overline{h'(0)\omega'(0)}}{(h'(0))^2(1-|\omega(0)|^2)}\right)z\overline
z^2\\
&-&\nonumber
\frac{1}{3!}\left(\frac{1}{1-|\omega(0)|^2}\left(\,\overline{\omega''(0)+2\omega'(0)\frac{h''}{h'}(0)}\,\right)\right)\overline
z^3+ \cdots
\end{eqnarray*}
\par
For a point $w\in\Omega\setminus\{0\}$, we consider the function $f_w=f(w+z)$ which is well defined in the domain $\Omega_w=\{z\in\C\colon z-w\in\Omega\}$ as before, and we define \emph{the Schwarzian derivative $S_f$ of $f$ at $w\in\Omega$} as $6$ times the difference between the coefficient of $z^3$ and the square of the coefficient of $z^2$ in the Taylor series of $F_w=(M_{f_w})^{-1}\circ f_w$. That is,
\begin{equation}\label{eq-newSchwarzian}
S_f=Sh+\frac{\overline \omega}{1-|\omega|^2}\left(\frac{h''}{h'}\,\omega'-\omega''\right)
-\frac 32\left(\frac{\omega'\,\overline
\omega}{1-|\omega|^2}\right)^2\quad \emph{in } \Omega\,,
\end{equation}
which is a generalization of the classical Schwarzian derivative
(\ref{eq-def-Sch-analytic}). (Note that, for $f$ analytic,
$\omega\equiv 0$.)

\subsection{The classical Schwarzian derivative in terms of the Jacobian}\label{ssec-Jacobian}

The Jacobian $J_f$ of a locally univalent harmonic mapping
$f=h+\overline g$ in the simply connected domain $\Omega$ is defined
by $J_f=|h'|^2-|g'|^2$. When $f$ is analytic (that is, when $g\equiv
0$ in $\Omega$), $J_f=|h^\prime|^2$. In this case, the classical
formulas for the pre-Schwarzian and the Schwarzian derivatives
((\ref{eq-def-preSch-analytic}) and (\ref{eq-def-Sch-analytic}),
respectively) can be written as
\begin{equation}\label{eq-ps}
Pf= \rho_z \quad \emph{and}\quad Sf=\rho_{zz}-\frac 12 \rho_z^2\,,
\end{equation}
where $\rho=\log(J_f)=\log |h^\prime|^2$ and $\rho_z=\partial \rho/\partial z$.
\par
It is just a matter of a straightforward calculation to check that for a sense-preserving harmonic mapping $f=h+\overline g$ with Jacobian $J_f$, the formula~(\ref{eq-newSchwarzian}) for the Schwarzian derivative of $f$ equals
\begin{equation}\label{eq-schw-jacobian}
S_f=\delta_{zz}-\frac 12 \delta_z^2\,,
\end{equation}
where $\delta=\log(J_f)=\log \left(|h^\prime|^2-|g^\prime|^2\right)$
in this case. In other words, we see again that $S_f$ generalizes
the classical formula (\ref{eq-def-Sch-analytic}).
\par
In view of (\ref{eq-schw-jacobian}) and (\ref{eq-ps}), we define the
\emph{pre-Schwarzian derivative} $P_f$ of a sense-preserving
harmonic mapping $f$ in $\Omega$ by $P_f=(\log(J_f))_z$. Then, we
get that
\[
S_f=(P_f)_z-\frac 12 (P_f)^2\,.
\]
\par
In terms of the cannonical decomposition of $f=h+\overline g$, we have
\begin{equation}\label{eq-preschw-jacobian}
P_f=\frac{h''}{h^\prime}-\frac{\overline\omega \omega^\prime}{(1-|\omega|^2)}\,,\quad z\in\Omega\,,
\end{equation}
where $\omega=g^\prime/h^\prime$ is the dilatation of $f$.
\par\smallskip
Notice that using the definition $P_f=(\log(J_f))_z$ for the pre-Schwarzian derivative of a sense-preserving harmonic mapping $f$,
 it is easy to conclude that $P_{\overline f}=P_f$ (just recall that $f$ is sense-preserving if and only if $\overline f$
 is sense-reversing and check that $J_{\overline f}=-J_f$). This readily implies that both the pre-Schwarzian derivative
 (\ref{eq-preschw-jacobian}) and the Schwarzian derivative (\ref{eq-newSchwarzian}) are well-defined for a\emph{ny locally univalent
 harmonic mapping} (sense-preserving or sense-reversing) in a simply connected domain $\Omega\subset\C$.

\subsection{The Schwarzian derivative as a Schwarzian tensor}\label{ssec-tensor}
As it was mentioned in the introduction, the Schwarzian derivative $\mathbb{S}$ presented in \cite{CHDO-DefSchw} is defined by the formula
\[
\mathbb{S}f=2(\sigma_{zz}-\sigma_z^2)\,,
\]
where $\sigma=\log(|h^\prime|+|g^\prime|)$. This means that
$\mathbb{S}f$ is equal to the Schwarzian tensor
$B_{\textbf{g}_\textbf{0}}(\log{\sigma})$ of the conformal metric
$\textbf{g}=e^{2\sigma} \textbf{g}_\textbf{0}$ relative to the
Euclidean metric $\textbf{g}_\textbf{0}$.
\par\smallskip
Consider now the metric
\[
\textbf{h}=e^{2\lambda_f} \textbf{g}_\textbf{0},
\]
with $\lambda_f=\sqrt{J_f}\,.$
\par
Note that since $f$ is sense-preserving, its Jacobian does not vanish. Bearing in mind (\ref{eq-schw-jacobian}) and the definition of $\lambda_f$, it is easy to check that
\[
S_f=2\left[(\log\lambda_f)_{zz}-(\log\lambda_f)_z^2\right]\,.
\]
Therefore, $S_f$ is equal to the Schwarzian tensor
$B_{\textbf{g}_\textbf{0}}(\log{\lambda_f})$ of the conformal metric
$e^{2\lambda_f}|dz|$ relative to the Euclidean metric.
\section{Some properties of the new pre-Schwarzian and Schwarzian derivatives}\label{sec-propertiesSchwarzian}
\par
In this section we obtain several properties related to the
pre-Schwarzian (\ref{eq-preschw-jacobian}) and Schwarzian
(\ref{eq-newSchwarzian}) derivatives for locally univalent harmonic
mappings in a simply connected domain. As we have mentioned in
Section~\ref{ssec-Jacobian}, there is no loss of generality by
assuming that $f$ is sense-preserving (because $P_{\overline f}=P_f$
and $S_{\overline f}=S_f$). We will consider this kind of mappings
to state the results that can be found in this section. We would
like to single out that some of these results can be proved using
the techniques developed in \cite{CHDO-DefSchw} and involving the
relationship of $S_f$ with the corresponding Schwarzian tensor.  The
proofs we show here involve only analytical arguments.
\subsection{The chain rule}\label{ssec-chainrule}
Let $f$ be a sense-preserving harmonic mapping in a simply connected
domain $\Omega\subset \C$. It is well-known that if $\varphi$ is a
locally univalent analytic function for which the composition
$f\circ\varphi$ is defined, then the function $f\circ\varphi$ is
again a sense-preserving harmonic mapping. A straightforward
calculation shows that
\begin{equation}\label{eq-chainruleharmonic}
S_{(f\circ\varphi)}=(S_f\circ\varphi)(\varphi^\prime)^2+S\varphi\,,
\end{equation}
which is a direct generalization of the chain rule
(\ref{eq-chainrule}) for the Schwarzian derivative of analytic
functions. The
same formula (\ref{eq-chainruleharmonic}) holds for the Schwarzian
derivative $\mathbb{S}$ defined by (\ref{eq-oldSch}). It is also easy to check that
$P_{f\circ\varphi}=(P_f\circ\varphi)\varphi^\prime+P\varphi$.

\par\smallskip
If $A$ is an affine mapping of the form $A(z)=az+b\overline z+c$
with $a, b, c \in\C$, the composition $A\circ f$ is harmonic (and
sense-preserving if $a\neq 0$ and $|b/a|<1$). In this case, and as
was mentioned in \cite{CHDO-DefSchw}, the Schwarzian derivative
(\ref{eq-oldSch}) of $A\circ f$ is not equal to that of $f$ unless
$b=0$.
\par
In Proposition~\ref{prop-afinne} below we prove that the
pre-Schwarzian and the Schwarzian derivatives of a locally univalent
harmonic mapping $f$ are preserved by pre-composi\-tion with affine
mappings. The proof will make use of the following lemma.
\begin{lem}\label{lem-defalternativa}
Let $f=h+\overline g$ be a sense-preserving harmonic mapping in the
simply connected domain $\Omega$ with dilatation $\omega$. Then, for
all $z_0\in\Omega$,
\begin{equation}\label{eq-lemdefalternativa}
P_f(z_0)= P(f-\overline{\omega(z_0)}g)(z_0)\quad \emph{and}\quad S_f(z_0)=S(f-\overline{\omega(z_0)}g)(z_0)\,.
\end{equation}
\end{lem}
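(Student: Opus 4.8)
The crucial point is that $\overline{\omega(z_0)}$ is a \emph{constant}, so $\phi:=f-\overline{\omega(z_0)}\,g$ is again a harmonic mapping, and since its co-analytic part $\overline g$ is unchanged, its analytic part is simply $H:=h-\overline{\omega(z_0)}\,g$. Here $P$ and $S$ (without a subscript) are the classical operators (\ref{eq-def-preSch-analytic})--(\ref{eq-def-Sch-analytic}); applied to $\phi$ they act on the analytic function $H$, so the right-hand sides of (\ref{eq-lemdefalternativa}) are $PH(z_0)$ and $SH(z_0)$. Because $f$ is sense-preserving we have $h'\neq 0$ and $|\omega|<1$, whence $H'=h'\,(1-\overline{\omega(z_0)}\,\omega)\neq 0$ near $z_0$ and $H$ is locally univalent; thus $PH$ and $SH$ are well defined. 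The plan is a direct computation: evaluate $PH$ and $SH$ at $z_0$ and match them with (\ref{eq-preschw-jacobian}) and (\ref{eq-newSchwarzian}). Conceptually, subtracting $\overline{\omega(z_0)}\,g$ is designed to load all the relevant second-order information at $z_0$ into the single analytic function $H$.

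For the pre-Schwarzian, writing $g'=\omega h'$ and $g''=\omega'h'+\omega h''$ and simplifying gives
\[
PH=\frac{H''}{H'}=\frac{h''}{h'}-\frac{\overline{\omega(z_0)}\,\omega'}{1-\overline{\omega(z_0)}\,\omega}.
\]
Setting $z=z_0$, where $\overline{\omega(z_0)}\,\omega(z_0)=|\omega(z_0)|^2$, reproduces exactly the right-hand side of (\ref{eq-preschw-jacobian}), so that $PH(z_0)=P_f(z_0)$.

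For the Schwarzian I would put $v:=\dfrac{\overline{\omega(z_0)}\,\omega'}{1-\overline{\omega(z_0)}\,\omega}$, so that $PH=\frac{h''}{h'}-v$, and expand
\[
SH=(PH)'-\tfrac12(PH)^2=Sh-v'+v\,\frac{h''}{h'}-\tfrac12 v^2,
\]
the term $Sh$ coming from the contributions of $h''/h'$. It then remains to compute $v'$, substitute $z=z_0$, and collect terms. The two summands of the form $(\omega')^2$ — one from $-v'$, one from $-\tfrac12 v^2$ — combine into $-\tfrac32\left(\overline{\omega(z_0)}\,\omega'(z_0)/(1-|\omega(z_0)|^2)\right)^2$, while the rest assembles into $\dfrac{\overline{\omega(z_0)}}{1-|\omega(z_0)|^2}\left(\frac{h''}{h'}\omega'-\omega''\right)(z_0)$; this is precisely (\ref{eq-newSchwarzian}) at $z_0$, giving $SH(z_0)=S_f(z_0)$.

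The argument is elementary, and the only delicate point — the step I expect to be the main obstacle — is the bookkeeping in the Schwarzian computation: one must differentiate $v$ with $\overline{\omega(z_0)}$ held fixed as a constant and only \emph{afterwards} put $z=z_0$ and use $\overline{\omega(z_0)}\,\omega(z_0)=|\omega(z_0)|^2$. Performing these operations in the wrong order, or mishandling the two separate sources of the squared term, is exactly what would corrupt the coefficient $-\tfrac32$; everything else is routine symbol-pushing.
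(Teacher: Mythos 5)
Your proposal is correct and follows essentially the same route as the paper: the authors introduce $\varphi_\lambda=h+\lambda g$, record the formulas for $P\varphi_\lambda$ and $S\varphi_\lambda$, and then set $\lambda=-\overline{\omega(z_0)}$ and evaluate at $z_0$, which is exactly your computation with $H=h-\overline{\omega(z_0)}g$ (your bookkeeping of the two $(\omega')^2$ contributions producing the coefficient $-\tfrac32$ checks out). The only difference is cosmetic: you specialize $\lambda$ from the start and supply slightly more of the ``straightforward calculation'' that the paper omits.
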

\begin{proof}
Recall that for $z_0\in\Omega$,
\[
P_f(z_0)= Ph(z_0)-\displaystyle\frac{\overline{\omega(z_0)}\omega^\prime(z_0)}{1-|\omega(z_0)|^2}
\]
and
\[
S_f(z_0)=Sh(z_0)
+\displaystyle\frac{\overline{\omega(z_0)}}{1-|\omega(z_0)|^2}\left(\frac{h''}{h'}(z_0)\,\omega'(z_0)-\omega''(z_0)\right)
-\frac 32\left(\frac{\omega'(z_0)\,\overline{
\omega(z_0)}}{1-|\omega(z_0)|^2}\right)^2\,.
\]
\par
For each $|\lambda|<1$, define $\varphi_\lambda=h+\lambda g$ in $\Omega$. A straightforward calculation
shows that
\begin{equation}\label{eq-thm-analyticpreschwarzian5}
P\varphi_\lambda=Ph+\frac{\lambda\omega^\prime}{1+\lambda \omega}
\end{equation}
and
\begin{equation}\label{eq-thm-analyticschwarzian6}
S\varphi_\lambda=Sh-\frac{\lambda}{1+\lambda \omega}\left(\frac{h''}{h'}\omega'-\omega''\right)
-\frac 32\left(\frac{\lambda \omega'}{1+\lambda\omega}\right)^2\,.
\end{equation}
Set $\lambda=-\overline{\omega(z_0)}$ and evaluate (\ref{eq-thm-analyticpreschwarzian5}) and (\ref{eq-thm-analyticschwarzian6}) at $z_0$ to get (\ref{eq-lemdefalternativa})\,.
\end{proof}
\begin{prop}\label{prop-afinne} Let $f=h+\overline g$ be a sense-preserving harmonic mapping in $\Omega$ with dilatation $\omega_f$. Consider the
affine harmonic mapping $A(z)=az+b\overline z+c$ with $a, b, c \in
\C$, and $|a|\neq |b|$. Then, $P_{(A\circ f)}\equiv P_f$ and
$S_{(A\circ f)}\equiv S_f \emph{ in } \Omega\,.$
\end{prop}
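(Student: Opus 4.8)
The plan is to bypass the dilatation formula entirely and argue from the Jacobian description $P_f=(\log J_f)_z$ together with $S_f=(P_f)_z-\frac12(P_f)^2$ recorded in Subsection~\ref{ssec-Jacobian}. Everything then collapses to a single observation: how $J_f$ transforms under post-composition with $A$. First I would write down the canonical representation of $F=A\circ f$. Since $A(w)=aw+b\overline w+c$ and $f=h+\overline g$, we have $F=af+b\overline f+c=(ah+bg+c)+\overline{(\overline a\,g+\overline b\,h)}$, so $F=H+\overline G$ with analytic parts $H=ah+bg+c$ and $G=\overline a\,g+\overline b\,h$, whence $H'=ah'+bg'$ and $G'=\overline a\,g'+\overline b\,h'$.

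The key step is the identity $J_F=(|a|^2-|b|^2)\,J_f$. Expanding $J_F=|H'|^2-|G'|^2$, the diagonal contributions combine to $(|a|^2-|b|^2)(|h'|^2-|g'|^2)$, while the two mixed terms are $2\,\mathrm{Re}(a\overline b\,h'\overline{g'})$ and $2\,\mathrm{Re}(\overline a b\,g'\overline{h'})$; because $\overline a b\,g'\overline{h'}=\overline{a\overline b\,h'\overline{g'}}$ these have equal real parts and cancel under subtraction. This gives $J_F=(|a|^2-|b|^2)\,J_f$, with the factor $|a|^2-|b|^2\neq0$ since $|a|\neq|b|$.

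With this identity the conclusion is immediate. The logarithmic $z$-derivative is insensitive to the nonzero constant factor, that is $(J_F)_z/J_F=(J_f)_z/J_f$, so $P_F=P_f$ throughout $\Omega$; and since $S_F=(P_F)_z-\frac12(P_F)^2$ depends on $F$ only through $P_F$, we get $S_F=S_f$ as well. The sense-reversing case $|a|<|b|$ needs no separate treatment, as $P$ and $S$ are defined there via $P_{\overline F}=P_F$ as noted at the end of Subsection~\ref{ssec-Jacobian}. I expect the only point demanding any care to be the cross-term cancellation in the Jacobian computation; the rest is formal, so there is essentially no genuine obstacle.

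If one instead wishes to use Lemma~\ref{lem-defalternativa} (the tool the preceding discussion was set up to supply), the route is as follows. A direct computation gives the dilatation $\omega_F=(\overline b+\overline a\,\omega)/(a+b\omega)$, and for a fixed $z_0$ one checks, writing $\omega_0=\omega(z_0)$, that $H-\overline{\omega_F(z_0)}\,G$ equals $\kappa\,(h-\overline{\omega_0}\,g)+c$ with the nonzero constant $\kappa=(|a|^2-|b|^2)/(\overline a+\overline b\,\overline{\omega_0})$. Thus $H-\overline{\omega_F(z_0)}\,G$ is an affine image of the analytic function $h-\overline{\omega_0}\,g$, and the classical affine invariance of $P$ and $S$ forces $P(H-\overline{\omega_F(z_0)}\,G)(z_0)=P(h-\overline{\omega_0}\,g)(z_0)$ and likewise for $S$. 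Applying Lemma~\ref{lem-defalternativa} to both $F$ and $f$ then yields $P_F(z_0)=P_f(z_0)$ and $S_F(z_0)=S_f(z_0)$ for every $z_0\in\Omega$, completing the proof; here the algebraic reduction to the affine image is the step requiring the most bookkeeping.
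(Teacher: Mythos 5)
Your proposal is correct, and your primary argument is genuinely different from the paper's. The paper first normalizes: it uses $P_{\overline f}=P_f$ and the invariance under $c_1f+c_2$ to reduce $A$ to the form $z+\alpha\overline z$ with $0<|\alpha|<1$, and then applies Lemma~\ref{lem-defalternativa} together with the identity $H-\overline{\omega_F(z_0)}\,G=\bigl(1-\overline{\alpha\omega_F(z_0)}\bigr)\bigl(h-\overline{\omega_f(z_0)}\,g\bigr)$; your second route is essentially this same computation carried out for general $a,b,c$ without the preliminary normalization (and, strictly speaking, for $|a|<|b|$ it should be preceded by passing to $\overline F=\overline A\circ f$ so that Lemma~\ref{lem-defalternativa} applies to a sense-preserving map). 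Your first route bypasses all of this: the identity $J_{A\circ f}=(|a|^2-|b|^2)\,J_f$, whose cross-term cancellation you verify correctly, combined with the definitions $P_f=(\log J_f)_z$ and $S_f=(P_f)_z-\tfrac12(P_f)^2$ from Subsection~\ref{ssec-Jacobian}, makes the invariance immediate, since a nonzero multiplicative constant is killed by the logarithmic $z$-derivative. This is shorter and makes the affine invariance conceptually transparent (it is exactly the ``homothetic Jacobians'' mechanism the paper later exploits in Theorem~\ref{thm-jacobians}); what it costs is that it leans on the asserted-but-unproved equivalence of the Jacobian formulas with (\ref{eq-newSchwarzian}) and (\ref{eq-preschw-jacobian}), whereas the paper's route works directly from the dilatation-based formula and exercises Lemma~\ref{lem-defalternativa}, which is reused later (for instance in Proposition~\ref{prop-convex}).
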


\begin{proof} As was mentioned in Section~\ref{sec-definitionSchwarzian}, $P_{\overline f}=P_f$. Since
$\overline{A\circ f}=\overline A \circ f$, there is no loss of
generality if we assume that $A$ is sense-preserving. Moreover, it
is easy to see that $P_{(c_1 f+c_2)}=P_f$ for all sense-preserving
harmonic mappings $f$, and all complex numbers $c_1$ and $c_2$ with
$c_1\in\C\setminus\{0\}$. Hence, we can suppose that
$A(z)=z+\alpha\overline z$ for some non-zero complex number
$|\alpha|<1$. Under these assumptions, $F=A\circ f= f+\alpha
\overline f$, and what we have to prove is that $P_F=P_f$ and
$S_f=S_F$.
\par
Notice that the canonical representation of $F$ in $\Omega$ equals $H+\overline G$ where $H=h+\alpha g$ and $G=\overline\alpha h +g$. The dilatation $\omega_F$ of $F$ is
\[
\omega_F(z)=\frac{\omega_f(z)+\overline \alpha}{1+\alpha\omega_f(z)}\,.
\]
\par
Fix an arbitrary point $z_0\in\Omega$. By Lemma~\ref{lem-defalternativa},
\[
P_f(z_0)=P(h-\overline{\omega_f(z_0)}g)(z_0)\quad \emph{ and }\quad P_F(z_0)=P(H-\overline{\omega_F(z_0)}G)(z_0)\,.
\]
Since
\begin{eqnarray}
\nonumber H-\overline{\omega_F(z_0)}G &=& (h+\alpha g)-\overline{\omega_F(z_0)}(\overline \alpha h+g)\\ &=&\nonumber \left(1-\overline{\alpha\omega_F(z_0)}\right)\cdot \left(h+\frac{\alpha-\overline{\omega_F(z_0)}}{1-\overline{\alpha\omega_F(z_0)}}g\right)\\
&=& \nonumber \left(1-\overline{\alpha\omega_F(z_0)}\right)\cdot \left(h-\overline{\omega_f(z_0)}g\right)\,,
\end{eqnarray}
we obtain that $P_F(z_0)=P_f(z_0)$ for each $z_0\in\Omega$. The same
proof works to show that $S_F(z_0)=S_f(z_0)$.
\end{proof}
\par
We would like to stress the following property for the pre-Schwarzian and Sch\-warz\-ian derivatives of harmonic mappings which follows from Proposition~\ref{prop-afinne}. It allows us to re-write $P_f$ and $S_f$ at each point $z_0\in\Omega$ in terms of the (classical) pre-Schwarzian and Schwarzian derivatives of the analytic part of another harmonic function.
\begin{cor}\label{cor-relw/analyticpart}
Let $f=h+\overline g$ be a sense-preserving harmonic mapping in
$\Omega$. Then, for each $z_0\in\Omega$,
there exists a sense-preserving harmonic mapping $F=H+\overline G$
in $\Omega$ such that $$P_f(z_0)=P_F(z_0)=PH(z_0) \quad
\emph{and}\quad S_f(z_0)=S_F(z_0)=SH(z_0)\,.$$
\end{cor}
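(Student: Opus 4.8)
The plan is to deduce the corollary directly from Lemma~\ref{lem-defalternativa}, which already rewrites $P_f(z_0)$ and $S_f(z_0)$ as the classical pre-Schwarzian and Schwarzian of the analytic function $h-\overline{\omega(z_0)}g$. The key observation is that the quantity $h-\overline{\omega(z_0)}g$ is itself the analytic part of a harmonic mapping, namely of the affine composition $F = f - \overline{\omega(z_0)}\,\overline{f}$ (viewing the fixed scalar $-\overline{\omega(z_0)}$ as the coefficient $b$ in an affine map $A(w)=w+b\overline w$). So the corollary is essentially just a restatement of the Lemma together with the invariance established in Proposition~\ref{prop-afinne}.

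First I would fix $z_0\in\Omega$ and set $\beta=-\overline{\omega(z_0)}$; note $|\beta|=|\omega(z_0)|<1$ since $f$ is sense-preserving, so the affine map $A(w)=w+\beta\overline w$ satisfies $|a|=1\neq|b|=|\beta|$ and hence $F=A\circ f$ is again a sense-preserving harmonic mapping in $\Omega$. Writing $F=H+\overline G$ in canonical form, one reads off $H=h+\beta g=h-\overline{\omega(z_0)}g$ and $G=\overline\beta h+g$. By Proposition~\ref{prop-afinne}, $P_F\equiv P_f$ and $S_F\equiv S_f$ throughout $\Omega$, so in particular $P_f(z_0)=P_F(z_0)$ and $S_f(z_0)=S_F(z_0)$. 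This gives the first equality in each chain.

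For the second equality I would apply Lemma~\ref{lem-defalternativa} to the mapping $F$ itself at the point $z_0$. The Lemma says $P_F(z_0)=P(H-\overline{\omega_F(z_0)}G)(z_0)$, and the point to check is that $\omega_F(z_0)=0$, so that this reduces to $P_F(z_0)=PH(z_0)$, and similarly $S_F(z_0)=SH(z_0)$. The main (and only genuinely computational) obstacle is verifying $\omega_F(z_0)=0$: using the transformation rule for the dilatation under $A$, namely $\omega_F=\tfrac{\omega_f+\overline{a^{-1}b}}{\,\cdots\,}$ type formula, one finds $\omega_F=\dfrac{\omega_f+\beta}{1+\overline\beta\,\omega_f}$ with $\beta=-\overline{\omega_f(z_0)}$, whose numerator vanishes at $z_0$. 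This is the step I expect to require care, but it is a short direct evaluation rather than a real difficulty; everything else is bookkeeping. Combining the two equalities yields $P_f(z_0)=P_F(z_0)=PH(z_0)$ and $S_f(z_0)=S_F(z_0)=SH(z_0)$, as claimed.
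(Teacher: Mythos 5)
Your construction $F=f-\overline{\omega_f(z_0)}\,\overline f$ is exactly the one the paper uses, and the overall argument (Proposition~\ref{prop-afinne} for the first equalities, vanishing of $\omega_F$ at $z_0$ for the second) is the paper's proof verbatim in structure. One slip to fix: the dilatation formula you wrote, $\omega_F=(\omega_f+\beta)/(1+\overline\beta\,\omega_f)$, is conjugated the wrong way; since $H=h+\beta g$ and $G=\overline\beta h+g$, one has $\omega_F=G'/H'=(\omega_f+\overline\beta)/(1+\beta\,\omega_f)$. As literally written, your numerator at $z_0$ equals $\omega_f(z_0)-\overline{\omega_f(z_0)}$, which vanishes only when $\omega_f(z_0)$ is real; with the correct formula the numerator is $\omega_f(z_0)+\overline\beta=\omega_f(z_0)-\omega_f(z_0)=0$, and the rest of your argument goes through unchanged.
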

\begin{proof}
Consider the harmonic mapping $F=f-\overline{\omega_f(z_0) f}=H+\overline G\,,$ where $\omega_f$ is the dilatation of $f$. By Proposition~\ref{prop-afinne}, $P_f(z_0)=P_F(z_0)$ and $S_f(z_0)=S_F(z_0)$. Since the dilatation $\omega_F$ of $F$ vanishes at $z_0$, we get $P_F(z_0)=PH(z_0)$ and $S_F(z_0)=SH(z_0)$.
\end{proof}
\subsection{Analytic Schwarzian derivative}\label{ssec-analytic Schwarzian derivative}
If a sense-preserving harmonic mapping $f$ in the simply connected domain $\Omega$ has constant dilatation, then $f=h+\alpha \overline h+\gamma$ for some complex constants $\alpha$ and $\gamma$ (with $|\alpha|<1$) and some locally univalent function $h$ that is analytic in $\Omega$. In this case, the pre-Schwarzian derivative $P_f$ of $f$ is analytic. On the other hand, if we assume that the pre-Schwarzian derivative $P_f$ of a harmonic mapping $f=h+\overline g$ with dilatation $\omega$ is analytic, we get that
\[
\frac{\partial P_f}{\partial\overline
z}=\frac{|\omega^\prime|^2}{(1-|\omega|^2)^2}\equiv 0 \quad
\emph{in}\quad \Omega\,,
\]
which implies that $\omega$ is constant. In other words, $P_f$ is
analytic if and only if the dilatation of $f$ is constant. Moreover,
it is easy to check that $P_f\equiv 0$ in $\Omega$ if and only if
$f$ is an affine harmonic mapping. This is, $f=az+b\overline z+c$
for certain constants $a, b,$ and $c$.
\par\smallskip
The next theorem characterizes the harmonic mappings with analytic
Schwarzian derivative.
\begin{thm}\label{thm-analyticSchwarzian}
Let $f=h+\overline g$ be a sense-preserving harmonic mapping in
$\Omega$. Then, $S_f$ is analytic if and only $f$ has constant
dilatation. That is, if and only if $f=h+\alpha \overline h+\gamma$,
for some locally univalent analytic mapping $h$, some $|\alpha|<1$,
and some $\gamma\in\C$.
\end{thm}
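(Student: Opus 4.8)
The plan is to reduce analyticity of $S_f$ to the vanishing of $\partial S_f/\partial\overline z$ and then show that this forces $\omega'\equiv0$. The ``if'' direction is immediate: if the dilatation $\omega$ is constant, then $\omega'=\omega''=0$, so formula (\ref{eq-newSchwarzian}) collapses to $S_f=Sh$, which is analytic because $h$ is.

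For the converse I would first compute $\partial S_f/\partial\overline z$ directly from (\ref{eq-newSchwarzian}). Writing $A=\overline\omega/(1-|\omega|^2)$, the three summands of $S_f$ are $Sh$, then $A\bigl(\tfrac{h''}{h'}\omega'-\omega''\bigr)$, then $-\tfrac32\,\omega'^2A^2$, and in all of these the factors $Sh$, $\tfrac{h''}{h'}\omega'-\omega''$ and $\omega'^2$ are analytic. A short Wirtinger computation gives the key identity $A_{\overline z}=\overline{\omega'}/(1-|\omega|^2)^2$, and differentiating the three terms then yields
\[
\frac{\partial S_f}{\partial\overline z}=\frac{\overline{\omega'}}{(1-|\omega|^2)^2}\left(\frac{h''}{h'}\,\omega'-\omega''-\frac{3\,\omega'^2\,\overline\omega}{1-|\omega|^2}\right)\,.
\]
The same quantity can be obtained from $S_f=(P_f)_z-\tfrac12 P_f^2$, which provides a useful consistency check (and where one must be careful that $\partial P_f/\partial\overline z=-|\omega'|^2/(1-|\omega|^2)^2$).

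Now assume $S_f$ is analytic, so the displayed expression vanishes on $\Omega$, and suppose toward a contradiction that $\omega$ is nonconstant. Then $\omega'$ is analytic and not identically zero, so $U=\{z\in\Omega:\omega'(z)\neq0\}$ is a nonempty open set (the zeros of $\omega'$ being isolated). On $U$ the prefactor is nonzero, so the parenthetical factor
\[
\Phi:=\frac{h''}{h'}\,\omega'-\omega''-\frac{3\,\omega'^2\,\overline\omega}{1-|\omega|^2}
\]
must vanish identically on $U$. Applying $\partial/\partial\overline z$ once more — the first two terms of $\Phi$ are analytic and $\partial/\partial\overline z(\omega'^2A)=\omega'^2A_{\overline z}$ — gives $\Phi_{\overline z}=-3\,\omega'^2\overline{\omega'}/(1-|\omega|^2)^2$, which is nonzero at every point of $U$ since $\omega'^2\overline{\omega'}=\omega'|\omega'|^2\neq0$ there. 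This contradicts $\Phi\equiv0$ on $U$, so $U$ is empty and $\omega$ is constant. Finally, a constant dilatation $\omega\equiv\alpha$ means $g'=\alpha h'$, hence $g=\alpha h+c$; sense-preservation forces $|\alpha|<1$, and $f=h+\overline g=h+\overline\alpha\,\overline h+\overline c$ has the asserted form after renaming the constants.

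The main obstacle is the bookkeeping in $\partial S_f/\partial\overline z$: one must differentiate the non-holomorphic factor $A$ correctly (the identity $A_{\overline z}=\overline{\omega'}/(1-|\omega|^2)^2$ is where a sign is easiest to lose), and then notice that the surviving factor $\Phi$ can itself be differentiated in $\overline z$ to expose the term $\omega'^2\overline{\omega'}$, which cannot vanish on $U$. Everything else is routine once the open set $U=\{\omega'\neq0\}$ is introduced to exploit the isolated-zeros property of the analytic function $\omega'$.
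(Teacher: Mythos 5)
Your argument is correct and follows essentially the same route as the paper: both proofs exploit that applying $\partial/\partial\overline z$ to the identity kills the analytic terms and produces a factor of $\overline{\omega'}$, restrict to the open set where $\omega'\neq0$, divide by $\overline{\omega'}$, and differentiate in $\overline z$ once more to force $\omega'\equiv0$. Your version is a slightly streamlined packaging (factoring the first $\overline z$-derivative of $S_f$ directly rather than first clearing denominators and organizing the identity as a quadratic in $\overline\omega$ as the paper does), but the underlying mechanism is identical and the computation checks out.
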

\begin{proof} If $f=h+\alpha \overline h+\gamma$ for some locally univalent analytic mapping $h$, some $|\alpha|<1$, and some $\gamma\in\C$, then $S_f\equiv Sh$ in $\Omega$ and the result follows.
\par
Suppose that a sense-preserving harmonic mapping $f=h+\overline g$ with dilatation $\omega$ has analytic Schwarzian $S_f$ defined by
\[
S_f=Sh+\frac{\overline \omega}{1-|\omega|^2}\left(\frac{h''}{h'}\,\omega'-\omega''\right)
-\frac 32\left(\frac{\omega'\,\overline
\omega}{1-|\omega|^2}\right)^2\,.
\]
\par
If $\omega\equiv a$, $a\in \D$, then $f=h+a \overline h+\gamma$ for some locally univalent analytic function $h$ and we are done. In order to get a contradiction, assume that $\omega$ is not constant in $\Omega$. After multiplying the last equation by $(1-|\omega|^2)^2$, we obtain
\begin{equation}\label{eq-thm-analyticschwarzian0}
(Sh-S_f)(1-|\omega|^2)^2+\overline \omega(1-|\omega|^2)\left(\frac{h^{\prime\prime}}{h^\prime}\,\omega^\prime-\omega^{\prime\prime}\right)
-\frac 32{\omega^\prime}^2\overline \omega^2=0\,.
\end{equation}
Re-writing (\ref{eq-thm-analyticschwarzian0}) in terms of $1, \overline\omega$, and $\overline{\omega}^2$, and denoting
$u=Sh-S_f$, we have
\begin{equation}\label{eq-thm-analyticschwarzian1}
u+\overline\omega\left(\frac{h^{\prime\prime}}{h^\prime}\,\omega^\prime-\omega^{\prime\prime}-2\omega u\right)+
\overline{\omega}^2\left(
\omega^2u-\omega\left(\frac{h^{\prime\prime}}{h^\prime}\,\omega^\prime-\omega^{\prime\prime}\right)-\frac
32{\omega^\prime}^2\right)=0\,.
\end{equation}
Differentiate with respect to $\overline z$ in (\ref{eq-thm-analyticschwarzian1}) to get

\begin{equation}\label{eq-thm-analyticschwarzian2}
\overline{\omega^\prime}\left(\frac{h^{\prime\prime}}{h^\prime}\,\omega^\prime-\omega^{\prime\prime}-2\omega u\right)+
2\overline{\omega\omega^\prime}\left(
\omega^2u-\omega\left(\frac{h^{\prime\prime}}{h^\prime}\,\omega^\prime-\omega^{\prime\prime}\right)-\frac
32{\omega^\prime}^2\right)=0\,.
\end{equation}
\par
Now, since $\omega$ is not constant, there exists a disk $D(z_0,R)$ with center $z_0$ and radius $R>0$ contained in $\Omega$ where $\omega^\prime\neq 0$. We can divide both sides of (\ref{eq-thm-analyticschwarzian2}) by $\overline{\omega^\prime}$, and take derivatives with respect to $\overline z$ again to obtain
\[
\overline{\omega^\prime}\left(
\omega^2u-\omega\left(\frac{h^{\prime\prime}}{h^\prime}\,\omega^\prime-\omega^{\prime\prime}\right)-\frac
32{\omega^\prime}^2\right)=0\, \quad \emph{in } D(z_0,R)\,.
\]
Keeping in mind that $\omega'\neq 0$ in $D(z_0,R)$, we see that
\begin{equation}\label{eq-thm-analyticschwarzian3}
\omega^2u-\omega\left(\frac{h^{\prime\prime}}{h^\prime}\,\omega^\prime-\omega^{\prime\prime}\right)-\frac
32{\omega^\prime}^2\equiv 0\,.
\end{equation}
Hence, from (\ref{eq-thm-analyticschwarzian2}) we have
\begin{equation}\label{eq-thm-analyticschwarzian4}
\frac{h''}{h'}\,\omega^\prime-\omega^{\prime\prime}-2\omega u=0\,,
\end{equation}
which implies, by (\ref{eq-thm-analyticschwarzian1}), that  $u=Sh-S_f\equiv
0$ in $D(z_0,R)\subset \Omega$. But in this case, using (\ref{eq-thm-analyticschwarzian4}), we obtain
\[
\frac{h^{\prime\prime}}{h^\prime}\,\omega^\prime-\omega^{\prime\prime}=0\,,
\]
and substituting in (\ref{eq-thm-analyticschwarzian3}), we get $\omega^\prime\equiv 0$ in $D(z_0,R)$. This is a contradiction. Hence, $\omega\equiv \alpha\in\D$ and $f=h+\alpha\overline h+\gamma$, as we wanted to prove.
\end{proof}
\par
\par\smallskip
Recall that a locally univalent analytic function $T$ in the simply
connected domain $\Omega$ is a M\"{o}bius transformation if and only
if $Sf\equiv 0$ in $\Omega$. A locally univalent harmonic M\"{o}bius
transformation is a harmonic mapping of the form $f=\alpha T+\beta
\overline T+\gamma$, where $T$ is a (classical) M\"{o}bius
transformation and $\alpha,\beta, \gamma\in\C$ are constants. These
transformations were characterized in \cite{CHDO-DefSchw} in terms
of the Schwarzian derivative (\ref{eq-oldSch}). The following
corollary also characterizes the harmonic M\"{o}bius transformation
in terms of $S_f$.

\begin{cor} A locally univalent mapping $f=h+\overline g$ equals a
harmonic M\"{o}bius transformation if and only if $S_f\equiv 0$.
\end{cor}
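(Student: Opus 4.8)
The plan is to read off the corollary from Theorem~\ref{thm-analyticSchwarzian} together with Proposition~\ref{prop-afinne}, handling the two implications separately. Throughout I may assume that $f$ is sense-preserving: since $S_{\overline f}=S_f$ and the conjugate of a harmonic M\"obius transformation is again one, the sense-reversing case follows immediately by passing to $\overline f$.

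For the forward implication, suppose $S_f\equiv 0$. Being identically zero, $S_f$ is in particular analytic, so Theorem~\ref{thm-analyticSchwarzian} applies and tells us that $f$ has constant dilatation, that is, $f=h+\alpha\overline h+\gamma$ for some locally univalent analytic $h$, some $|\alpha|<1$, and some $\gamma\in\C$. As recorded in the opening line of the proof of that theorem, for such an $f$ one has $S_f\equiv Sh$; hence $Sh\equiv 0$. By the classical characterization of M\"obius transformations via the vanishing of the Schwarzian, $Sh\equiv 0$ forces $h$ to be a (classical) M\"obius transformation $T$. Therefore $f=T+\alpha\overline T+\gamma$, which is precisely a harmonic M\"obius transformation.

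For the converse, suppose $f=\alpha T+\beta\overline T+\gamma$ is a locally univalent harmonic M\"obius transformation, and write $f=A\circ T$, where $A(w)=\alpha w+\beta\overline w+\gamma$ is an affine harmonic mapping and $T$ is the underlying analytic M\"obius transformation (sense-preserving on $\Omega$ since it is analytic and locally univalent there). Local univalence of $f$ forces $|\alpha|\neq|\beta|$, because $J_f=(|\alpha|^2-|\beta|^2)|T'|^2$. Thus Proposition~\ref{prop-afinne} applies with inner map $T$ and affine map $A$, giving $S_f=S_{A\circ T}=S_T$. Since $T$ is analytic, its dilatation vanishes and $S_T$ reduces to the classical Schwarzian derivative $ST$, which is zero because $T$ is M\"obius. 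Hence $S_f\equiv 0$.

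The argument is essentially bookkeeping on top of the two earlier results, so I do not expect a genuine obstacle; the only points that require care are the initial reduction to the sense-preserving case and the observation that the identity $S_f\equiv Sh$ for $f=h+\alpha\overline h+\gamma$ — which is what lets one pass from $S_f\equiv 0$ to $Sh\equiv 0$ — is exactly what the first sentence of the proof of Theorem~\ref{thm-analyticSchwarzian} supplies. No new computation is needed.
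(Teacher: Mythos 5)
Your proposal is correct and follows essentially the same route as the paper: reduce to the sense-preserving case via $S_{\overline f}=S_f$, use Theorem~\ref{thm-analyticSchwarzian} to get constant dilatation and $Sh\equiv 0$ in the forward direction, and verify the converse directly (the paper simply asserts it, while you spell it out via Proposition~\ref{prop-afinne}, which is a harmless elaboration). No gaps.
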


\begin{proof} Since $S_{\overline f}=S_f$, we can assume that $f$ is sense-preserving. If $f$ is a M\"{o}bius harmonic transformation, then $S_f=0$. Conversely, if $S_f=0$, by Theorem~\ref{thm-analyticSchwarzian}, $f=h+\alpha \overline h+\gamma$ for some constants $\alpha\in\D$ and $\gamma\in\C$, and for some locally univalent analytic function $h$ with $Sh=0$. Hence, $h$ is a M\"{o}bius transformation.
\end{proof}
\subsection{Harmonic Schwarzian derivative}\label{ssec-antianalytic Schwarzian derivative}
The (classical) Schwarzian derivative of a locally univalent analytic function $\varphi$ is also analytic. This can be understood as a consequence of the fact that for such a function $\varphi$, its pre-Schwarzian derivative $P\varphi$ is an analytic function.
\par
In the case of harmonic mappings $f=h+\overline g$, it is not difficult to check that the pre-Schwarzian derivative $P_f$ is harmonic if and only if the dilatation $\omega$ of $f$ is constant (which is also equivalent, as we proved in Section~\ref{ssec-analytic Schwarzian derivative}, to the fact that the pre-Schwarzian and the Schwarzian derivatives of $f$ are analytic). In this section we see that this is precisely the case for which $S_f$ is harmonic.
\par
\begin{thm}\label{thm-antianalyticSchwarzian}
Let $f=h+\overline g$ be a sense-preserving harmonic mapping in a simply connected domain $\Omega$. If $S_f$ is harmonic, then  $S_f$ is analytic.
\end{thm}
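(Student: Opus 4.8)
The plan is to deduce the result from Theorem~\ref{thm-analyticSchwarzian}, which already characterizes analyticity of $S_f$ as the constancy of the dilatation $\omega$. Since the analytic part $Sh$ of $S_f$ is itself harmonic, $S_f$ is harmonic precisely when $\partial^2 S_f/\partial z\,\partial\overline z=0$. I would argue by contradiction: assume $\omega$ is not constant and choose a disk $D(z_0,R)\subset\Omega$ on which $\omega'\neq 0$, exactly as in the proof of Theorem~\ref{thm-analyticSchwarzian}. The goal is to show that harmonicity of $S_f$ on this disk forces $\omega'\equiv 0$, which is absurd; hence $\omega$ must be constant, and then $S_f=Sh$ is analytic.

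First I would compute $\partial S_f/\partial\overline z$ from the defining formula~(\ref{eq-newSchwarzian}). Writing $A=(h''/h')\omega'-\omega''$ and $B=\omega'$ (both analytic) and $P=1-|\omega|^2$, the elementary identity $P+\omega\overline\omega=1$ yields the clean simplifications $\partial(\overline\omega/P)/\partial\overline z=\overline{\omega'}/P^2$ and $\partial(\overline\omega^2/P^2)/\partial\overline z=2\overline\omega\,\overline{\omega'}/P^3$. Substituting these I expect to obtain
\[
\frac{\partial S_f}{\partial\overline z}=\frac{\overline{\omega'}}{P^3}\bigl(AP-3B^2\overline\omega\bigr).
\]
Because the factor $\overline{\omega'}$ is anti-analytic, applying $\partial/\partial z$ annihilates it, so on $D(z_0,R)$ (where $\overline{\omega'}\neq 0$) the harmonicity condition $\partial^2 S_f/\partial z\,\partial\overline z=0$ reduces to $\partial\bigl[(AP-3B^2\overline\omega)/P^3\bigr]/\partial z=0$.

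To finish, I would clear the denominator (using $\partial P/\partial z=-\overline\omega\,\omega'$) and expand the result as a quadratic polynomial in $\overline\omega$ with analytic coefficients $C_0,C_1,C_2$, where $C_0=A'$, $C_1=-2A'\omega+2AB-6BB'$, and $C_2=A'\omega^2-2AB\omega+6BB'\omega-9B^3$. Differentiating the identity $C_0+C_1\overline\omega+C_2\overline\omega^2=0$ twice with respect to $\overline z$ and dividing by $\overline{\omega'}\neq 0$ each time --- the same peeling argument used for~(\ref{eq-thm-analyticschwarzian2})--(\ref{eq-thm-analyticschwarzian4}) --- forces $C_2\equiv C_1\equiv C_0\equiv 0$ on $D(z_0,R)$. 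Back-substitution then gives $A'=0$, next $A=3B'$, and finally $C_2=-9B^3=0$, i.e. $\omega'\equiv 0$, contradicting the choice of the disk. The main obstacle is purely the bookkeeping: carrying out the two $\partial/\partial z$ and $\partial/\partial\overline z$ differentiations cleanly and checking that after using the two preceding relations the coefficient of $\overline\omega^2$ really collapses to $-9B^3$; everything else follows the template of Theorem~\ref{thm-analyticSchwarzian}.
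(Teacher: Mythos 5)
Your proposal is correct and follows essentially the same route as the paper: both reduce harmonicity of $S_f$, on a disk where $\omega'\neq 0$, to a polynomial identity $C_0+C_1\overline\omega+C_2\overline\omega^{\,2}=0$ with analytic coefficients (your $C_0$, $C_1$, $C_2$ agree exactly with the paper's $\varphi_1'$, $2\varphi_2-2\varphi_1'\omega$, and $\varphi_1'\omega^2-2\varphi_2\omega-9\omega'^3$), then peel off the coefficients by repeated $\partial/\partial\overline z$ and division by $\overline{\omega'}$ to force $\omega'^3\equiv 0$, a contradiction. The only cosmetic difference is that you compute $\partial S_f/\partial\overline z$ first and factor out $\overline{\omega'}$ before applying $\partial/\partial z$, whereas the paper writes down the full mixed derivative at once and then clears $(1-|\omega|^2)^4$.
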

\begin{proof}
By a straightforward calculation,
\begin{eqnarray}\label{eq-thm-antianalyticSchwarzian1}
\frac{\partial ^2 S_f}{\partial{\overline z}\partial z} &=& \varphi_2\frac{2\overline\omega\overline{\omega^\prime}}{(1-|\omega|^2)^3}+
\varphi_1^\prime\frac{\overline{\omega^\prime}}{(1-|\omega|^2)^2}
- 9{\omega^\prime}^3\frac{{\overline\omega}^2\overline{\omega^\prime}}{(1-|\omega|^2)^4}\,,
\end{eqnarray}
where
\[
\varphi_1=\omega^\prime\frac{h^{\prime\prime}}{h^\prime}-\omega^{\prime\prime}\quad \emph{and}\quad \varphi_2=\varphi_1\omega^\prime -3\omega^\prime\omega^{\prime\prime}\,.
\]
\par
Assume that $\Delta S_f\equiv 0$ in $\Omega$. Hence, according to
formula (\ref{eq-laplacian}), the right hand side of
(\ref{eq-thm-antianalyticSchwarzian1}) is identically zero in
$\Omega$. After multiplying (\ref{eq-thm-antianalyticSchwarzian1})
by $(1-|\omega|^2)^4$, we get
\begin{eqnarray}\label{eq-thm-antianalyticSchwarzian2}
\quad\quad 0 &\equiv& 2\varphi_2\overline\omega \overline{\omega^\prime}(1-|\omega|^2)+
\varphi_1^\prime \overline{\omega^\prime}(1-|\omega|^2)^2-9{\omega^\prime}^3{\overline\omega}^2\overline{\omega^\prime}\,.
\end{eqnarray}
If the dilatation $\omega$ of $f$ is constant, then the Schwarzian
derivative of $f$ is analytic. If $\omega$ is not constant, there
exists a disk $D(z_0,R)\subset \Omega$ where $\omega^\prime\neq 0$.
Dividing (\ref{eq-thm-antianalyticSchwarzian2}) by
$\overline{\omega^\prime}$, we obtain
\begin{eqnarray}
\nonumber \quad\quad 0 &\equiv& 2\varphi_2\overline\omega (1-|\omega|^2)+
\varphi_1^\prime (1-|\omega|^2)^2-9{\omega^\prime}^3{\overline\omega}^2\,,
\end{eqnarray}
which is equivalent to
\begin{eqnarray}\label{eq-thm-antianalyticSchwarzian4}
\quad\quad  0 &\equiv& \varphi_1^\prime+\left(2\varphi_2-2\varphi_1^\prime\omega\right)\overline\omega+
\left(\varphi_1^\prime{\omega}^2-2\varphi_2\omega-9{\omega^\prime}^3\right){\overline\omega}^2\,.
\end{eqnarray}

Taking derivatives with respect to $\overline z$ in
(\ref{eq-thm-antianalyticSchwarzian4}) and dividing by
$\overline{\omega^\prime}$, we have
\begin{eqnarray*}\label{eq-thm-antianalyticSchwarzian5}
\qquad\qquad  0 &\equiv& \left(2\varphi_2-2\varphi_1^\prime\omega\right)+2
\left(\varphi_1^\prime{\omega}^2-2\varphi_2\omega-9{\omega^\prime}^3\right){\overline\omega}\, \quad\emph{in}\quad D(z_0,R)\,.
\end{eqnarray*}
If $\varphi_1^\prime{\omega}^2-2\varphi_2\omega-9{\omega^\prime}^3\neq 0$, then $\overline\omega$ must be constant (in this case $\omega$ would be both analytic and anti-analytic). This is a contradiction since we are assuming that $\omega^\prime\neq 0$ in $D(z_0,R)$. Hence, $\varphi_1^\prime{\omega}^2-2\varphi_2\omega-9{\omega^\prime}^3\equiv 0$ (in some disk $D\subset D(z_0,R)$). This fact implies that $\varphi_2-\varphi_1^\prime\omega=0$ in $D$. Therefore, using  (\ref{eq-thm-antianalyticSchwarzian4}), we obtain that  $\varphi_2=\varphi_1^\prime=0$ as well and we conclude that $\omega$ is constant in $D$. This proves the theorem.
\end{proof}
\par
\section{Locally univalent harmonic mappings with equal pre-Schwarzian derivatives}\label{sec-equalSchwarzian}
By exploiting the differential geometry of the associated minimal surface to a locally univalent, sense-preserving harmonic function $f=h+\overline g$ with dilatation $\omega=q^2$ (for some analytic function $q$ with $|q|<1$), Chuaqui, Duren, and Osgood characterized the sense-preserving harmonic functions with equal Schwarzian derivatives $\mathbb{S}$ (see \cite{CHDO-DefSchw}). Their theorem can be stated as follows.
\begin{other}\label{other}
Let $f=h+\overline g$ and $F=H+\overline G$ be sense-preserving harmonic functions defined on a common domain $\Omega\subset \C$. Let $\omega_f=q^2$ and $\omega_F=Q^2$ be the dilatations of $f$ and $F$, respectively. Then,
\begin{itemize}
\item[(a)] If $\omega_f$ is not constant, then $\mathbb{S}f=\mathbb{S}F$ if and only if  $|h^\prime|+|g^\prime|=c\ (|H^\prime|+|G^\prime|)$ for some constant $c>0$\,.
\item[(b)]  If $\omega_f$ is constant, then $\mathbb{S}f=\mathbb{S}F$ if and only if $f=h+\alpha \overline h$, $F=H+\beta \overline H$, and $H=T(h)$ for some locally univalent functions $h$ and $H$, some complex constants $\alpha$ and $\beta$ with $|\alpha|<1$ and $|\beta|<1$, and some analytic M\"{o}bius transformation $T$.
\end{itemize}
\end{other}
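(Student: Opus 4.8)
The plan is to reduce the whole statement to the single conformal factor $\sigma=\log(|h'|+|g'|)$ (and $\Sigma=\log(|H'|+|G'|)$ for $F$), since $\mathbb{S}f=2(\sigma_{zz}-\sigma_z^2)$ sees $f$ only through $\sigma$. First I would record the reformulation that makes everything transparent: with $u=e^{-\sigma}$ and $U=e^{-\Sigma}$ one checks directly that $u_{zz}=-\tfrac12(\mathbb{S}f)\,u$ and $U_{zz}=-\tfrac12(\mathbb{S}F)\,U$. Hence $\mathbb{S}f=\mathbb{S}F$ says exactly that $u$ and $U$ are two solutions of one and the same second-order linear equation $\eta_{zz}+\tfrac12(\mathbb{S}f)\,\eta=0$ in the variable $z$ (with $\overline z$ a parameter). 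Differentiating the $z$-Wronskian $W=u_zU-uU_z$ and using the equation gives $W_z\equiv 0$, so $W$ is anti-analytic.

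The two ``if'' directions are then immediate. If $|h'|+|g'|=c\,(|H'|+|G'|)$ then $\sigma=\Sigma+\log c$, so $\sigma_z=\Sigma_z$ and $\sigma_{zz}=\Sigma_{zz}$, whence $\mathbb{S}f=\mathbb{S}F$; this is the easy half of (a). For (b), when $\omega_f=q^2$ is constant we have $q'\equiv 0$ and formula (\ref{eq-oldSch}) collapses to $\mathbb{S}f=Sh$; the hypotheses $F=H+\beta\overline H$ (so $\mathbb{S}F=SH$) and $H=T(h)$ with $T$ M\"obius give $SH=S(T\circ h)=Sh$ by the chain rule, hence $\mathbb{S}F=\mathbb{S}f$.

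For the converse in the non-constant case (a) I would show $W\equiv 0$. Indeed, if $W\equiv 0$ then $(u/U)_z=W/U^2\equiv 0$, so $u/U$ is anti-analytic; being also real and positive it is a constant $1/c$, whence $e^{\sigma}=c\,e^{\Sigma}$, that is $|h'|+|g'|=c\,(|H'|+|G'|)$. Thus the entire content of (a) is the implication ``$\omega_f$ non-constant $\Rightarrow W\equiv 0$''. Here I would bring in the one piece of data not yet used, the Gauss-curvature identity $\sigma_{z\overline z}=|q'|^2/(1+|q|^2)^2$ (and its analogue for $\Sigma$): writing $\tau=\sigma-\Sigma$ one gets $\tau_z=-W/(uU)$, and differentiating in $\overline z$ while inserting $\tau_{z\overline z}=|q'|^2/(1+|q|^2)^2-|Q'|^2/(1+|Q|^2)^2$ produces an identity that the anti-analytic $W$ must satisfy against these explicit curvature terms. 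The main obstacle is precisely to turn this identity into $W\equiv 0$: one must exploit that $W$ depends on $\overline z$ alone while the curvature data is genuinely mixed wherever $q'\neq 0$, forcing $W=0$ on the open set where $q'\neq 0$ and then everywhere by analyticity. The conceptually cleanest organization is geometric --- $e^{2\sigma}|dz|^2$ and $e^{2\Sigma}|dz|^2$ are the induced metrics of the two associated minimal surfaces, $\mathbb{S}f$ is their Osgood--Stowe Schwarzian tensor, and non-flatness (guaranteed by $q'\not\equiv 0$) removes every M\"obius degree of freedom except scaling --- but I expect the honest analytic verification of ``$W\equiv 0$'' to be the delicate step.

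Finally, for the converse in case (b): when $\omega_f$ is constant $\mathbb{S}f=Sh$ is analytic, so $\mathbb{S}F$ is analytic. The characterization that an analytic $\mathbb{S}F$ forces $\omega_F$ to be constant (proved by differentiating (\ref{eq-oldSch}) in $\overline z$ exactly as in the proof of Theorem~\ref{thm-analyticSchwarzian}) then gives $F=H+\beta\overline H$ with $\mathbb{S}F=SH$; and $SH=Sh$ yields $H=T(h)$ for a M\"obius transformation $T$ by the classical uniqueness for the analytic Schwarzian.
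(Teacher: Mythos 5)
First, a structural point: the paper does not prove this statement. It is Theorem~\ref{other}, imported verbatim from Chuaqui, Duren, and Osgood \cite{CHDO-DefSchw}, and the paper explicitly attributes its proof to ``the differential geometry of the associated minimal surface.'' So there is no in-paper argument to compare yours against; I can only judge your attempt on its own terms.

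Your reductions are correct as far as they go: $u=e^{-\sigma}$ does satisfy $u_{zz}+\tfrac 12(\mathbb{S}f)\,u=0$, the Wronskian $W=u_zU-uU_z$ is annihilated by $\partial_z$, both ``if'' directions are right, and the converse of (a) is correctly equivalent to $W\equiv 0$ (equivalently, $\tau=\sigma-\Sigma$ satisfies the Osgood--Stowe ``M\"obius factor'' equation $\tau_{zz}-\tau_z^2-2\tau_z\Sigma_z=0$). But the one step that carries the entire content of part (a) --- deducing $W\equiv 0$ from the non-constancy of $\omega_f$ --- is not carried out: you write down the identity you would differentiate and then state that you expect ``the honest analytic verification'' to be the delicate step. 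That verification \emph{is} the theorem. Note that anti-analyticity of $W$ cannot by itself force $W\equiv 0$: when $q'\equiv 0$ one may take $F=T\circ f$ for a genuine M\"obius $T$, so that $\mathbb{S}f=\mathbb{S}F$ while the conformal factors are not homothetic and $W\not\equiv 0$. Hence any completion must quantitatively use the curvature term $|q'|^2/(1+|q|^2)^2$ on the set where $q'\neq 0$ (for instance via the rigidity statement that a conformal metric of somewhere nonzero curvature admits only constant M\"obius factors), and until that is written out, or the relevant Osgood--Stowe theorem is explicitly invoked, part (a) remains unproved. A smaller issue: your converse of (b) rests on the claim that an analytic $\mathbb{S}F$ forces $Q$ to be constant, which is an analogue of Theorem~\ref{thm-analyticSchwarzian} for $\mathbb{S}$ rather than for $S_F$; the strategy of differentiating (\ref{eq-oldSch}) in $\overline z$ is plausible, but the coefficients there differ from those in (\ref{eq-newSchwarzian}), so this needs to be checked rather than asserted.
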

\par
Using Theorem~\ref{thm-analyticSchwarzian}, it is easy to prove that
an analogous result to (b) in the last theorem holds for the new
Schwarzian derivative (\ref{eq-newSchwarzian}). Namely, we can see
that if $\omega_f$ is constant, then $S_f=S_F$ if and only if
$\mathbb{S}f=\mathbb{S}F$. Hence, the functions $f$ and $F$ are
related as in statement (b) in Theorem~\ref{other}.
\par
Keeping in mind that $\mathbb{S}f$ is defined in terms of $\sigma=|h^\prime|+|g^\prime|$, and that the definition of $S_f$ involves the Jacobian
$J_f=|h^\prime|^2-|g^\prime|^2$, it seems logical to expect that the condition $J_f=c J_F$ (for some constant $c>0$) should have some influences on the relation between the Schwarzian derivatives of $f$ and $F$. We have not been able to prove that if $S_f=S_F$ (and the dilatations are non-constant), then the Jacobians of $f$ and $F$ are homothetic. However, in the next theorem, we see that this is true for the \emph{pre-Schwarzian} derivative.
\begin{thm}\label{thm-jacobians}
Let $f=h+\overline g$ and $F=H+\overline G$ be sense-preserving
harmonic functions defined on a common simply connected domain
$\Omega\subset \C$ with non-constant dilatations $\omega_f$ and
$\omega_F$, respectively. Then $P_f=P_F$ if and only if the
Jacobians are homothetic. That is, if and only if
$|h^\prime|^2-|g^\prime|^2=c\ (|H^\prime|^2-|G^\prime|^2)$ for some
constant $c>0$\,.
\end{thm}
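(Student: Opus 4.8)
The plan is to work entirely with the Jacobian form of the pre-Schwarzian derivative. Recall from Subsection~\ref{ssec-Jacobian} that, by definition, $P_f=(\log J_f)_z$, where $J_f=|h'|^2-|g'|^2$, and likewise $P_F=(\log J_F)_z$. Since $f$ and $F$ are sense-preserving, $J_f$ and $J_F$ are strictly positive smooth functions on $\Omega$, so their logarithms are globally well-defined, single-valued, and smooth; this is what makes the log form convenient to manipulate.

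The reverse implication is immediate: if $J_f=c\,J_F$ for some constant $c>0$, then $\log J_f=\log c+\log J_F$, and differentiating in $z$ annihilates the additive constant, giving $P_f=(\log J_f)_z=(\log J_F)_z=P_F$.

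For the forward implication, I would suppose $P_f=P_F$ in $\Omega$ and consider the real-valued function $u=\log(J_f/J_F)=\log J_f-\log J_F$. From $P_f=P_F$ we get $(\log J_f)_z=(\log J_F)_z$, that is, $u_z\equiv 0$ in $\Omega$. The key observation is that $u$ is real-valued, so $u_{\overline z}=\overline{u_z}\equiv 0$ as well; equivalently, writing $u_z=\tfrac12(u_x-iu_y)=0$ forces both $u_x$ and $u_y$ to vanish. Hence $u$ is constant on the connected domain $\Omega$, say $u\equiv\log c$ with $c=e^{u}>0$, and therefore $J_f=c\,J_F$, which is the asserted homothety of the Jacobians.

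There is no serious obstacle here beyond recognizing the right object to differentiate: the essential point is that the relation $P_f=P_F$ is \emph{linear} in $\log J_f$, so it collapses to the elementary fact that a real-valued function with vanishing $\partial_z$-derivative is constant. By contrast, $S_f=S_F$ yields the nonlinear Riccati-type equation $\delta_{zz}-\tfrac12\delta_z^2=\tilde\delta_{zz}-\tfrac12\tilde\delta_z^2$ for $\delta=\log J_f$ and $\tilde\delta=\log J_F$, for which $\delta-\tilde\delta$ constant is sufficient but not evidently necessary; this is precisely why the analogous Schwarzian statement is not accessible by the same argument. I would also point out that the hypothesis that $\omega_f$ and $\omega_F$ be non-constant is not actually used anywhere above—the reasoning applies verbatim in the constant-dilatation case as well—so it is retained only to parallel the statement of Theorem~\ref{other}.
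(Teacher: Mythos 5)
Your proof is correct, but it is a genuinely different and far more elementary argument than the one in the paper. Your key step — that $u=\log J_f-\log J_F$ is \emph{real-valued}, so $u_z=0$ forces $u_x=u_y=0$ and hence $u$ is constant on the connected domain $\Omega$ — is sound and settles the forward implication in one line; the reverse implication is handled identically in both proofs. The paper instead runs a much longer constructive argument: it normalizes $f$ and $F$ (via a Riemann map, an affine post-composition killing the dilatation at a base point, and a rescaling), proves in Lemma~\ref{lem-Sh} that $P_f=P_F$ forces $Ph=PH$ and hence $h=H$ by comparing Taylor coefficients at the origin, and then extracts from the remaining identity that $\omega_F^\prime=\overline{k}\,\omega_f^\prime$ with $|k|=1$, concluding $J_f=J_F$ for the normalized mappings. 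What the longer route buys is precisely the structural information ($h=H$ after normalization, and the relation between the dilatations) that feeds the corollary following Theorem~\ref{thm-jacobians}, which identifies the pairs $(H^\prime,\omega_F)$ obtainable from $(h^\prime,\omega_f)$ under the group $\mathcal G$; your argument yields the homothety of the Jacobians but none of that finer structure. You are also right that the non-constancy of the dilatations is not needed for the statement as such — it is needed for the paper's \emph{method} (to find a point where $\omega_f^\prime\neq 0$ for the normalization) and for the corollary, not for the equivalence itself.
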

\begin{pf}
Recall that $P_f=(\log J_f)_z$. Therefore, if the Jacobians are
homothetic, then $P_f=P_F$.
\par
Assume now that $P_f=P_F$. Since the dilatations are not constant,
there exists a point $z_0\in\Omega$ with $\omega_f^\prime(z_0)$ and
$\omega_F^\prime(z_0)$ different from $0$. Consider a Riemann
mapping $\psi$ from the disk onto $\Omega$ with $\psi(0)=z_0$ and
define $f_0=f\circ\psi=h_0+\overline{g_0}$ and
$F_0=F\circ\psi=H_0+\overline{G_0}$. The dilatations of $f_0$ and
$F_0$ are $\omega_{f_0}=\omega_f\circ \psi$ and
$\omega_{F_0}=\omega_F\circ \psi$, respectively. Hence, they are
non-constant analytic functions in the unit disk with
$\omega_{F_0}^\prime(0)\neq 0$ and $\omega_{f_0}^\prime(0)\neq 0$.
\par
Using that $P_{(f\circ\psi)}=(P_f\circ\psi)\psi^\prime+P\psi$, we
see that that $P_{f_0}=P_{F_0}$ in $\D$.
\par\smallskip
Now, it is a straightforward calculation to check that the
sense-preserving harmonic mappings
$f_1=f_0-\overline{\omega_{f_0}(0)\, f_0} =h_1+\overline{g_1}$ and
$F_1=F_0-\overline{\omega_{F_0}(0)\,f_0}=H_1+\overline{G_1}$ have
equal pre-Schwarzian derivatives and that their corresponding
dilatations $\omega_{f_1}$ and $\omega_{F_1}$ fix the origin and
have non-zero derivatives at $z=0$. Notice also that the functions
$f_2=(f_1-f_1(0))/h_1^\prime(0)$ and
$F_2=(F_1-F_1(0))/H_1^\prime(0)$ satisfy the additional
normalizations $h_2(0)=H_2(0)=g_2(0)=G_2(0)=0$ and
$h_2^\prime(0)=H_2^\prime(0)=1$.
\par\smallskip
To sum up, we have constructed two sense-preserving harmonic
mappings (that we again denote by $f=h+\overline{g}$ and
$F=H+\overline{G}$\,) ç in the unit disk with non-constant
dilatations $\omega_{f}$ and $\omega_{F}$, respectively, with
$\omega_{f}(0)=\omega_{F}(0)=h(0)=H(0)=g(0)=G(0)=0$,
$h^\prime(0)=H^\prime(0)=1$, $\omega^\prime_{f}(0)\neq 0$, and
$\omega^\prime_{F}(0)\neq 0$, and such that $P_{f}=P_{F}$.
\par\smallskip
We first need to prove the following lemma.
\begin{lem}\label{lem-Sh}
Let $f$ and $F$ be two sense-preserving harmonic mappings normalized
as above. If $P_{f}=P_{F}$, then $Ph=PH$ and therefore $h=H$.
\end{lem}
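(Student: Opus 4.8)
The plan is to isolate the analytic part of the pre-Schwarzian and exploit the fact that $Ph-PH$ is forced to be holomorphic. Write $\omega=\omega_f$ and $\mu=\omega_F$ for the two analytic dilatations, and recall from (\ref{eq-preschw-jacobian}) that $P_f=Ph-\overline\omega\,\omega'/(1-|\omega|^2)$, with the analogous formula for $F$. The hypothesis $P_f=P_F$ then rearranges to
\[
Ph-PH=\frac{\overline\omega\,\omega'}{1-|\omega|^2}-\frac{\overline\mu\,\mu'}{1-|\mu|^2}.
\]
The left-hand side is holomorphic in $z$, so the right-hand side must be holomorphic as well; all the rigidity in the lemma comes from this single observation.

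Next I would expand the right-hand side as a power series in $z$ and $\overline z$, using the geometric series $1/(1-|\omega|^2)=\sum_{n\ge 0}|\omega|^{2n}$ (valid since $|\omega|<1$). Because $\omega(0)=\mu(0)=0$, the factor $|\omega|^2=\omega\overline\omega$ has antiholomorphic degree at least $1$, so the only contribution to the coefficient of $\overline z^{\,1}$ comes from the leading term $\overline\omega\,\omega'$. Using $\omega(z)=\omega'(0)z+\cdots$, that coefficient, as a function of $z$, equals $\overline{\omega'(0)}\,\omega'(z)-\overline{\mu'(0)}\,\mu'(z)$. Since the right-hand side is holomorphic, every coefficient of a positive power of $\overline z$ must vanish; in particular
\[
\overline{\omega'(0)}\,\omega'(z)\equiv\overline{\mu'(0)}\,\mu'(z).
\]
As $\omega'(0)$ and $\mu'(0)$ are both nonzero by the normalization, dividing and integrating (using $\omega(0)=\mu(0)=0$) gives $\mu=c\,\omega$ with $c=\overline{\omega'(0)}/\overline{\mu'(0)}$.

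Finally, evaluating $\mu=c\,\omega$ at the level of first derivatives yields $\mu'(0)=c\,\omega'(0)$, which together with the definition of $c$ forces $|\mu'(0)|=|\omega'(0)|$ and hence $|c|^2=1$. But then $|\mu|^2=|\omega|^2$ and $\overline\mu\,\mu'=|c|^2\,\overline\omega\,\omega'=\overline\omega\,\omega'$, so the two fractions on the right-hand side coincide and $Ph-PH\equiv 0$, i.e.\ $Ph=PH$. From $Ph=PH$ one computes $(h'/H')'=0$, so $h'/H'$ is the constant $h'(0)/H'(0)=1$; together with $h(0)=H(0)=0$ this yields $h=H$. I expect the only delicate point to be the bookkeeping in the series expansion — justifying that the coefficient of $\overline z^{\,1}$ is exactly $\overline{\omega'(0)}\,\omega'(z)-\overline{\mu'(0)}\,\mu'(z)$ and spotting the clean cancellation produced once $|c|=1$; the normalizations $\omega(0)=\mu(0)=0$ together with the nonvanishing first derivatives are precisely what make this extraction pin $\mu$ down as a constant multiple of $\omega$.
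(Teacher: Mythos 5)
Your proof is correct, but it takes a genuinely different route from the paper's. The paper applies $\partial^n/\partial z^n$ to the identity $P_f=P_F$ and evaluates at the origin: since the dilatation term $\overline{\omega}\,\omega'/(1-|\omega|^2)$ carries an anti-analytic factor $\overline{\omega}$ that is annihilated neither by $z$-differentiation nor resurrected at $z=0$ (because $\omega(0)=0$), every such derivative of the dilatation term vanishes at the origin, so the Taylor coefficients of $Ph$ and $PH$ agree and $Ph=PH$ follows at once — the relation between the two dilatations is never needed at this stage. You instead extract the coefficient of $\overline z^{\,1}$ in the double power series of $Ph-PH$, which forces $\overline{\omega'(0)}\,\omega'\equiv\overline{\mu'(0)}\,\mu'$, hence $\mu=c\,\omega$ with $|c|=1$, and only then do the two non-analytic terms cancel to give $Ph=PH$. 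Your argument is slightly longer but yields as a by-product the unimodular rotation relating $\omega_F$ to $\omega_f$, which is essentially the information the paper has to re-derive afterwards (the identity $\omega_F'=\overline{k}\,\omega_f'$ with $|k|=1$) in order to finish the proof of Theorem~\ref{thm-jacobians}; the paper's version is the more economical proof of the lemma in isolation. The one point worth making explicit in your write-up is the justification for term-by-term extraction: near the origin the right-hand side is real-analytic in $(z,\overline z)$ with an absolutely convergent double series (the geometric series converges since $|\omega|<1$), so holomorphy does force every coefficient of a positive power of $\overline z$ to vanish, and the resulting identity $\overline{\omega'(0)}\,\omega'=\overline{\mu'(0)}\,\mu'$ then extends from a neighborhood of $0$ to all of $\D$ by the identity theorem. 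With that said, all steps check out, including the final integration $h'/H'\equiv h'(0)/H'(0)=1$.
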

\begin{pf}
Suppose that $P_{f}=P_{F}$. By (\ref{eq-preschw-jacobian}), this
condition is
\begin{equation}\label{eq-Sch-equal}
Ph-\frac{\overline{\omega_{f}}\,
\omega_{f}^\prime}{1-|\omega_{f}|^2}=
PH-\frac{\overline{\omega_{F}}\,
\omega_{F}^\prime}{1-|\omega_{F}|^2} \,.
\end{equation}
Since $P_{f}=P_{F}$, we have that for all non-negative integer $n$
\[
\left(\frac{\partial\,^n P_f}{\partial z^n}\right)(0)=\left(\frac{\partial\,^n P_F}{\partial z^n}\right)(0)\,.
\]
Using that $\omega_f(0)=\omega_F(0)=0$ and (\ref{eq-Sch-equal}), we
obtain
\[
\left(\frac{\partial\,^n P_f}{\partial z^n}\right)(0)=
Ph^{(n)}(0)\quad \emph{and}\quad \left(\frac{\partial\,^n
P_F}{\partial z^n}\right)(0)=PH^{(n)}(0)\,,
\]
where $Ph^{(n)}(0)$ and $PH^{(n)}(0)$ denote the $n-$th derivatives
of $Ph$ and $PH$, respectively. Since both $Ph$ and $PH$ are
analytic functions in the unit disk, it follows that $Ph=PH$ in
$\D$. Hence, $h=AH+B$ for certain constants $A, B$ with $A\neq 0$.
Using that $h(0)=H(0)=0$ and that $h^\prime(0)=H^\prime(0)$, we get that
$h\equiv H$ in the unit disk.
\end{pf}
\par\smallskip
We now continue with the proof of Theorem~\ref{thm-jacobians}. Once
we know that if $P_f=P_F$ then $h=H$, we have from
(\ref{eq-Sch-equal}) that
\begin{equation}\label{eq-Sch-equal-2}
\frac{\overline{\omega_{f}}\, \omega_{f}^\prime}{1-|\omega_{f}|^2}= \frac{\overline{\omega_{F}}\, \omega_{F}^\prime}{1-|\omega_{F}|^2}\,.
\end{equation}
\par
Taking derivatives with respect to $\overline z$ on both sides of (\ref{eq-Sch-equal-2}), we obtain
\begin{equation}\label{eq-Sch-equal-3}
\frac{\overline {\omega_f}^\prime\,
\omega_f^\prime}{(1-|\omega_f|^2)^2}= \frac{\overline
{\omega_F}^\prime\, \omega_F^\prime}{(1-|\omega_F|^2)^2}\,,
\end{equation}
which is equivalent to
\begin{equation}\label{eq-Sch-equal-4}
\overline {\omega_f}^\prime\, \omega_f^\prime
\left(1+\frac{1}{(1-|\omega_f|^2)^2}-1\right) =\overline
{\omega_F}^\prime\, \omega_F^\prime
\left(1+\frac{1}{(1-|\omega_f|^2)^2}-1\right)\,.
\end{equation}
Note that (\ref{eq-Sch-equal-4}) can be written as
\[
\overline{\omega_f^\prime} \omega_f^\prime +\overline{\omega_f} P= \overline{\omega_F^\prime} \omega_F^\prime +\overline{\omega_F} Q\,,
\]
for certain functions $P$ and $Q$. Hence,
\[
\left(\frac{\partial^n}{\partial z^n}\left(\overline{\omega_f^\prime} \omega_f^\prime +\overline{\omega_f} P\right)\right)(0)= \left(\frac{\partial^n}{\partial z^n}\left(\overline{\omega_F^\prime} \omega_F^\prime +\overline{\omega_F} Q\right)\right)(0)
\]
for all non-negative integer $n$. Using again that both $\omega_f$
and $\omega_F$ fix the origin, we obtain
\[
\overline{\omega_f^\prime(0)}\omega_f^{(n)}(0)
=\overline{\omega_F^\prime(0)}\omega_F^{(n)}(0)\,,
\]
which implies that
\[
\overline{\omega_f^\prime(0)}
\omega_f^\prime=\overline{\omega_F^\prime(0)} \omega_F^\prime\,.
\]
Therefore, $\omega_F^\prime=\overline k\omega_f^\prime$, with
$k=\omega_f^\prime(0)/\omega_F^\prime(0)\neq 0$, and
(\ref{eq-Sch-equal-3}) becomes
\[
\frac{|\omega_{f}^\prime|}{1-|\omega_{f}|^2}= |k|^2
\frac{|\omega_{f}^\prime|}{1-|\overline k\omega_{f}|^2}\quad
\]
in the unit disk. In particular, taking $z=0$, we see that $|k|=1$. Hence,
\[
J_f=|h^\prime|^2(1-|\omega_f|^2)= J_F\,,
\]
which proves the theorem for functions with the normalizations stated
before Lem\-ma~\ref{lem-Sh} (that is, for those functions $f_2$ and
$F_2$ that we constructed from the initials $f$ and $F$). Unwinding
the definitions back to $f$ and $F$ leads directly to the assertion
of the theorem.
\end{pf}
\par\medskip
We would like to remark that using the arguments from
\cite{CHDO-DefSchw}, it is possible to prove that the condition
$J_f=c\, J_F$ is equivalent to the fact that
$(H^\prime,\omega_F)=\gamma(h^\prime,\omega_f)$ for some element
$\gamma$ in the group $\mathcal G$ generated by the following
transformations of the pair $(h^\prime,\omega_f)$:
\[
R_p(\lambda):\quad (h^\prime,\omega_f)\to (\lambda h^\prime,\omega_f)\,, \quad \lambda\neq 0\,,
\]
\[
R_q(\mu):\quad (h^\prime,\omega_f)\to (h^\prime,\mu \omega_f)\,,
\quad |\mu|=1\,,
\]
\[
I(a):\quad (h^\prime,\omega_f)\to \left(\frac{(1+\overline
a\omega_f)^2}{1-|a|^2}h^\prime, \frac{a+\omega_f}{1+\overline a
\omega_f}\right)\,,\ \quad a\in\D\,.
\]
Notice that this group $\mathcal G$ is different from the one presented in \cite{CHDO-DefSchw}.
\par
\par\medskip
Except for the case when the dilatations are constant, the action of the elements of the group $\mathcal G$ can be identified with the following operators acting on the space of sense-preserving harmonic mappings: Let $f=h+\overline g$ be such a function with dilatation $\omega$. Then
\begin{equation}\label{eq-functions}
R_p(\lambda)(f)=\lambda h+\overline{\lambda g}\,,\quad R_q(\mu)(f)=h+\overline{\mu g}\,,\quad \emph{and} \quad I(a)(f)= H+\overline G\,,
\end{equation}
where $H^\prime=\lambda h^\prime/\sqrt{\phi_a^\prime\circ \omega}$
for certain constant $\lambda\neq 0$, and
$Q=G^\prime/H^\prime=\phi_a\circ \omega$. The function $\phi_a$ is
defined in the unit disk by
\begin{equation}\label{eq-automorphism}
\phi_a(z)=\frac{a+z}{1+\overline a z}\,.
\end{equation}
Note that since $f$ is supposed to be a sense-preserving harmonic
mapping, the composition $\phi_a\circ \omega$ is well defined. Also,
that $I_a(f)=f+\overline{af}$.
\par\smallskip
It is not difficult to check that the operations described by
(\ref{eq-functions}) are precisely the ones we used in the proof of
Theorem~\ref{thm-jacobians} to construct $f_2$ and $F_2$ from $f_0$
and $F_0$, respectively.
\par\smallskip
We end this section with the following result that follows directly
from the proof of Theorem~\ref{thm-jacobians}.
\begin{cor}
Let $f=h+\overline g$ and $F=H+\overline G$ be sense-preserving harmonic functions defined on a common domain $\Omega\subset \C$ with non-constant dilatations $\omega_f$ and $\omega_F$, respectively. Then $P_f=P_F$ if and only if then there exist $a\in\D$, $\mu$ on $\partial\D$, and $\lambda \neq 0$ such that
\[
\omega_F=\mu(\phi_a\circ\omega_f)\quad \emph{and}\quad
H^\prime=\frac{\lambda\, h^\prime}{\sqrt{\phi_a\circ\omega_f}}\,,
\]
where $\phi_a$ is the automorphism of $\D$ defined by
(\ref{eq-automorphism}).
\end{cor}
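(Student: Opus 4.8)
The plan is to deduce both implications from Theorem~\ref{thm-jacobians}, which already identifies the condition $P_f=P_F$ with the homothety $J_f=c\,J_F$ of the Jacobians. Thus the whole corollary amounts to translating the homothety condition into the explicit relations between the pairs $(\omega_f,h^\prime)$ and $(\omega_F,H^\prime)$.

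For the sufficiency I would start from the proposed relations $\omega_F=\mu(\phi_a\circ\omega_f)$ and $H^\prime=\lambda h^\prime/\sqrt{\phi_a^\prime\circ\omega_f}$ (the quantity appearing in the operator $I(a)$) and simply compute $J_F=|H^\prime|^2(1-|\omega_F|^2)$. Using the classical identities $\phi_a^\prime(w)=(1-|a|^2)/(1+\overline a w)^2$ and $1-|\phi_a(w)|^2=(1-|a|^2)(1-|w|^2)/|1+\overline a w|^2$, together with $|\mu|=1$, the factors $|1+\overline a\omega_f|^{\pm2}$ and $(1-|a|^2)^{\pm1}$ cancel and one is left with $J_F=|\lambda|^2 J_f$. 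Since $|\lambda|^2>0$, Theorem~\ref{thm-jacobians} then yields $P_f=P_F$.

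For the necessity I would assume $P_f=P_F$ and differentiate this identity with respect to $\overline z$. Since the analytic parts satisfy $\partial_{\overline z}(h^{\prime\prime}/h^\prime)=\partial_{\overline z}(H^{\prime\prime}/H^\prime)=0$, the computation already performed in Subsection~\ref{ssec-analytic Schwarzian derivative} gives
\[
\frac{|\omega_f^\prime|^2}{(1-|\omega_f|^2)^2}=\frac{|\omega_F^\prime|^2}{(1-|\omega_F|^2)^2}\,,
\]
for all $z\in\Omega$; that is, $\omega_f$ and $\omega_F$ pull the hyperbolic metric of $\D$ back to the \emph{same} conformal metric on $\Omega$. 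The next, and main, step is a rigidity argument. Near any point $z_0$ with $\omega_f^\prime(z_0)\neq0$ (such points exist because $\omega_f$ is non-constant), $\omega_f$ is a local biholomorphism, so $\Phi=\omega_F\circ\omega_f^{-1}$ is holomorphic and satisfies $|\Phi^\prime(w)|/(1-|\Phi(w)|^2)=1/(1-|w|^2)$. By the equality case of the Schwarz--Pick lemma this forces $\Phi$ to be the restriction of an automorphism of $\D$; by the identity theorem and the connectedness of $\Omega$ the local identity $\omega_F=\Phi\circ\omega_f$ propagates to all of $\Omega$ for a single automorphism $\Phi$, which I would write as $\Phi=\mu\phi_a$ with $|\mu|=1$ and $a\in\D$ using (\ref{eq-automorphism}). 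Hence $\omega_F=\mu(\phi_a\circ\omega_f)$. This step packages, in coordinate-free form, the Taylor-coefficient computation carried out for the normalized functions in the proof of Theorem~\ref{thm-jacobians}.

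Finally, to recover the analytic parts I would invoke Theorem~\ref{thm-jacobians} once more: $P_f=P_F$ gives $J_f=c\,J_F$ for some $c>0$. Substituting the expression for $\omega_F$ just obtained and reusing the two identities for $\phi_a$, the equation $J_F=J_f/c$ reduces to $|H^\prime|=|\lambda|\,|h^\prime|/|\sqrt{\phi_a^\prime\circ\omega_f}|$ with $|\lambda|=1/\sqrt c$. Now $h^\prime$, $H^\prime$, and the analytic branch $\sqrt{\phi_a^\prime\circ\omega_f}=\sqrt{1-|a|^2}/(1+\overline a\omega_f)$ are all analytic and zero-free, so the quotient $H^\prime\sqrt{\phi_a^\prime\circ\omega_f}/h^\prime$ is an analytic function of constant modulus $|\lambda|$, hence a constant of that modulus; absorbing it into $\lambda$ gives $H^\prime=\lambda h^\prime/\sqrt{\phi_a^\prime\circ\omega_f}$, as required. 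The only delicate point is the rigidity step of the previous paragraph: one must ensure the locally defined isometry $\Phi$ is globally well defined on $\Omega$, which is exactly where the identity theorem and the connectedness of $\Omega$ enter.
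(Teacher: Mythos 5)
Your argument is correct, but it reaches the conclusion by a genuinely different route from the paper's. The paper offers no separate proof: the corollary is meant to be read off by unwinding the proof of Theorem~\ref{thm-jacobians}, where the functions are first normalized (precomposition with a Riemann map, the affine operation $I(a)$, the rescalings $R_p$ and $R_q$), Lemma~\ref{lem-Sh} gives $h=H$, and a Taylor--coefficient comparison at the origin yields $\omega_F'=\overline k\,\omega_f'$ with $|k|=1$, hence $\omega_F=\overline k\,\omega_f$; the stated relations then appear by reversing the normalizations, which are exactly the group operations (\ref{eq-functions}). You instead differentiate $P_f=P_F$ with respect to $\overline z$ to see that $\omega_f$ and $\omega_F$ pull back the Poincar\'e metric to the same metric, invoke rigidity of holomorphic hyperbolic isometries to get $\omega_F=T\circ\omega_f$ for a single automorphism $T=\mu\phi_a$, and recover the relation between $h'$ and $H'$ from $J_f=c\,J_F$ because an analytic zero-free function of constant modulus is constant. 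This is cleaner and coordinate-free, avoiding the normalization bookkeeping, at the price of one loose citation: the equality case of the Schwarz--Pick lemma is stated for self-maps of all of $\D$, whereas your $\Phi=\omega_F\circ\omega_f^{-1}$ lives only on the open set $\omega_f(D(z_0,r))$. What you actually need is that a holomorphic map from an open subset of $\D$ into $\D$ whose pullback of the Poincar\'e metric equals the Poincar\'e metric on that set is locally the restriction of an automorphism; this is true and standard (compare Taylor coefficients of $T^{-1}\circ\Phi$ at a point, or use that local isometries of constant-curvature metrics are determined by their $1$-jets), but it deserves an explicit line since it is where the whole argument turns. Note finally that you have silently worked with $\sqrt{\phi_a'\circ\omega_f}$ --- the quantity appearing in $I(a)$ and in (\ref{eq-functions}) --- rather than the $\sqrt{\phi_a\circ\omega_f}$ printed in the corollary; that is the correct reading, since $\phi_a\circ\omega_f$ may vanish or fail to admit an analytic square root while $\phi_a'\circ\omega_f=(1-|a|^2)/(1+\overline a\,\omega_f)^2$ is zero-free with an obvious branch, so your version is the one that should (and does) get proved.
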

\section{Norm of the Schwarzian derivative}\label{sec-Schwarziannorm}
\par
Let $f=h+\overline g$ be a sense-preserving harmonic mapping in the unit disk $\D$. Using the chain rule (\ref{eq-chainruleharmonic}),
it is easy to see that for each $z\in\D$,
$$|S_f(z)|(1-|z|^2)^2=|S_{(f\circ{\phi})}(0)|\,,$$ where $\phi$ is any automorphism of the unit disk with $\phi(0)=z$.
The \emph{Schwarzian norm} $\|S_f\|$ of $f$ is defined by
\[
\|S_f\|=\sup_{z\in\D}|S_f(z)|(1-|z|^2)^2\,.
\]
\par
Krauss \cite{Krauss} proved that if $f$ is analytic and univalent in the unit disk, then  $\|Sf\|\leq 6$. Another related result due to Nehari \cite{Nehari} states that if $f$ is \emph{convex} (that is, $f$ is univalent in the unit disk and the domain $f(\D)$ is convex), then $\|Sf\|\leq 2$. Both constants $2$ and $6$ are sharp: The \emph{analytic Koebe mapping}
\[
k(z)=\frac{z}{(1-z)^2},\quad z\in\D\,,
\]
is univalent in the unit disk. It maps $\D$ onto the full plane
minus the part of the negative real axis from $-1/4$ to infinity and
has Schwarzian derivative
\[
Sk(z)=-\frac{6}{(1-z^2)^2}\,.
\]
Hence, $\|S_f\|=6$. Note that $|Sf(r)|(1-r^2)^2=6$ for all real numbers $-1<r<1$. The (analytic and univalent) strip mapping $s$, defined in the unit disk by (\ref{eq-strip-analytic}) is convex. It has Schwarzian derivative
\[
Ss(z)=\frac{2}{(1-z^2)^2}\,.
\]
The same constant $2$ also appears in the Nehari criterion for univalence. This criterion states that if the Schwarzian norm of a locally univalent analytic function $f$ in the unit disk is bounded by $2$, then $f$ is univalent in $\D$.
\par\smallskip
Let now $\mathcal F$ be the family of univalent sense-preserving
harmonic mappings $f=h+\overline g$ in the unit disk with dilatation
$w=q^2$ for some analytic function $q$ (with $|q|<1$) in $\D$.
Notice that the harmonic Koebe function $K$ as in
(\ref{eq-Koebe-harmonic}) does not belong to $\mathcal F$ since $K$ has
dilatation $w(z)=z$. There is an analogous result to that by Krauss
for the functions in $\mathcal F$ in terms of the Schwarzian
derivative $\mathbb{S}$ defined by (\ref{eq-oldSch}). Namely, it was
proved in \cite{Ch-D-O-08} that there exists a constant $C_1$ such
that
\begin{equation}\label{eq-kraussoldSchw}
\|\mathbb{S}f\|=\sup_{z\in\D}|\mathbb{S}f(z)|(1-|z|^2)^2\leq C_1
\end{equation}
for all $f\in\mathcal F$. The sharp value of $C_1$ is unknown. It is thought
to be equal to $16$ since the Schwarzian norm of the univalent
harmonic mapping $K_2$ constructed as the horizontal shear of the
Koebe function with dilatation $\omega(z)=z^2$ belongs to $\mathcal F$ and
satisfies $\|\mathbb{S}K_2\|=16$. The function $K_2=h+\overline g$
is defined by
\begin{equation}\label{eq-formulaK2}
h(z)=\frac 13\left[\frac{1}{(1-z)^3}-1\right]\ \emph{and  } g(z)=\left[\frac{z^2-z+\frac 13}{(1-z)^3}-\frac 13\right]\,.
\end{equation}
\par
We will prove an analogous bound to that in (\ref{eq-kraussoldSchw})
for the norm of the Schwarzian derivative $S_f$. Concretely, we will
see that there exists a constant (with unknown sharp value) $C_2$
such that \emph{for all} univalent harmonic mappings $f$ in the unit
disk (with no extra assumption on the dilatation), the inequality
$\|S_f\|\leq C_2$ holds. The proof will use the results obtained in
Section \ref{ssec-Schwarziannormconvex} below that are related to
convex (univalent) harmonic mappings.
\par\smallskip
All the results obtained in this section can be stated in terms of the pre-Schwarzian norm defined by
\[
\|P_f\|=\sup_{z\in\D} |P_f(z)|(1-|z|^2)\,.
\]
The proofs are similar to those presented here (and often easier). This is the reason why we have
decided to leave most of the explicit statements involving $\|P_f\|$
out. The exception is Theorem~\ref{thm-convexpres}, where we obtain
a sharp bound for the pre-Schwarzian norm of any convex harmonic
mapping.
\subsection{The Schwarzian norm of convex harmonic mappings}\label{ssec-Schwarziannormconvex}
Let $f=h+\overline g$ be a univalent harmonic mapping in the unit
disk. If $f(\D)$ is convex, we say that $f$ is a convex (harmonic)
mapping. In Section~\ref{ssec-shear}, we have presented three
particular examples of this kind of mappings. We begin this section
by computing their Schwarzian derivatives and Schwarzian norms.
\begin{ex} Consider the half-plane harmonic mapping $L$ defined by (\ref{eq-formulaL}). Its Sch\-warz\-ian derivative equals
$$\begin{array}{cll}
S_L(z)&=&\displaystyle-\frac 32\frac{1}{(1-z)^2}+\frac{3\overline
z}{(1-z)(1-|z|^2)} -\frac 32\left(\frac{\overline
z}{1-|z|^2}\right)^2\\[0.4cm]
&=& \displaystyle -\frac32\left(\frac{1}{1-z}-\frac{\overline
z}{1-|z|^2}\right)^2=-\frac32\left(\frac{1-\overline
z}{1-z}\cdot\frac{1}{1-|z|^2}\right)^2\,.\end{array}$$
Note that for all $z\in \D$, $(1-|z|^2)^2\left|S_L(z)\right|=3/2$. Therefore, $\|S_L\|=3/2$.
\end{ex}
\begin{ex} The Schwarzian derivative of the half-strip harmonic mapping $S_1$ given by (\ref{eq-formulaS1}) is
$$S_{S_1}(z)=\frac{2(1-|z|^2)(1+\overline z)+3(1-\overline z^2)(1+z)}{2(1-z^2)(1+z)(1-|z|^2)^2}\,.$$ Therefore,
$$|S_{S_1}(z)|(1-|z|^2)^2=\left|\frac{(1+\overline z)(1-|z|^2)}{(1-z^2)(1+z)}+\frac 32\cdot\frac{1-\overline z^2}{1-z^2}\right|\leq \frac 52\,.$$
Moreover, $S_{S_1}(r)(1-r^2)^2=5/2$ for all real numbers $r\in (-1,1)$ and hence, $\|S_{S_1}\|=5/2$.
\end{ex}
\par
The final example is related to the strip harmonic mapping $S_2$ defined by (\ref{eq-formulaS2}).
\begin{ex} The Schwarzian derivative of $S_2$ is
$$S_{S_2}(z)=2\ \ \frac{2 + |z|^4  - \overline z^2 -2|z|^4 \overline z^2 }{(1 - z^2)(1 - |z|^4)^2}\,.$$ Hence,
$$\begin{array}{lll}\left|S_{S_2}(z)\right| (1-|z|^2)^2 & = & 2\  \ \displaystyle
\left|\frac{2 + |z|^4  - \overline z^2 -2|z|^4 \overline z^2 }{(1 - z^2)(1 +|z|^2)^2}\right|\\[0.5cm]
&=& 2\ \ \displaystyle\left|\frac{1-|z|^4+1-\overline z^2+2|z|^4(1-\overline z^2)}{(1 - z^2)(1 +|z|^2)^2}\right|\\[0.5cm]
&\leq &2\ \ \displaystyle\frac{2+|z|^2+2|z|^4}{(1+|z|^2)^2}=4-2\frac{|z|^2}{(1+|z|^2)^2}\leq 4\,.
\end{array}$$
Since $S_{S_2}(0)=4$, we obtain that $\|S_{S_2}\|=4$.
\end{ex}
\par
As we said in the beginning of this Section, it is known that the
Schwarzian norm of every convex analytic function is less than or
equal to $2$. In the next proposition, we prove an analogous theorem
for convex harmonic mappings. To do so, we use one of the theorems
by Clunie and Sheil-Small (see \cite[Theorem 5.7]{CSS}): If $f$ is a
convex harmonic mapping in the unit disk, then for each $\varepsilon
\in \overline \D$, the analytic function
$\varphi_\varepsilon=h+\varepsilon g$ is close-to-convex in $\D$,
thus univalent.
\begin{prop}\label{prop-convex}
Let $f=h+\overline g$ be a convex harmonic mapping in the unit disk. Then
$$\|S_f\|\leq 6\,.$$
\end{prop}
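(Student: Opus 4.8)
The plan is to reduce the desired bound on the harmonic Schwarzian $S_f$ to the classical bound of Krauss for univalent analytic functions, by using the pointwise reduction already established in Corollary~\ref{cor-relw/analyticpart}. Since $S_{\overline f}=S_f$, there is no loss of generality in assuming that $f$ is sense-preserving, so that its dilatation $\omega_f$ satisfies $|\omega_f|<1$ in $\D$; this is the regime in which $S_f$ is given by (\ref{eq-newSchwarzian}).

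First I would fix an arbitrary point $z_0\in\D$ and apply Corollary~\ref{cor-relw/analyticpart}: there exists a sense-preserving harmonic mapping $F=H+\overline G$ with $S_f(z_0)=S_F(z_0)=SH(z_0)$, where $SH$ is the classical Schwarzian of the analytic part $H=h-\overline{\omega_f(z_0)}\,g$ (the extra terms in $S_F$ carry a factor $\overline{\omega_F}$, which vanishes at $z_0$ because $\omega_F(z_0)=0$). Writing $\eps=-\overline{\omega_f(z_0)}$, this analytic function is precisely the shear $h+\eps g$, and since $f$ is sense-preserving we have $|\eps|=|\omega_f(z_0)|<1$, so $\eps\in\overline\D$.

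The key step is then to invoke the theorem of Clunie and Sheil-Small (\cite[Theorem~5.7]{CSS}) recalled just before the statement: because $f$ is convex, the analytic function $h+\eps g$ is close-to-convex, and hence univalent, in $\D$ for \emph{every} $\eps\in\overline\D$. Thus $H=h+\eps g$ is univalent in $\D$, and by the theorem of Krauss \cite{Krauss} its classical Schwarzian satisfies $\|SH\|\le 6$; in particular $|SH(z_0)|(1-|z_0|^2)^2\le 6$. Combining this with $S_f(z_0)=SH(z_0)$ gives $|S_f(z_0)|(1-|z_0|^2)^2\le 6$, and taking the supremum over $z_0\in\D$ yields $\|S_f\|\le 6$.

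I do not expect a genuine computational obstacle here; the only delicate point is the conceptual one of recognizing that the analytic function whose classical Schwarzian reproduces $S_f(z_0)$ is exactly one of the shears $h+\eps g$ occurring in the Clunie--Sheil-Small theorem, with admissible parameter $\eps=-\overline{\omega_f(z_0)}\in\overline\D$. Once this identification is made, the pointwise reduction $S_f(z_0)=SH(z_0)$ together with the univalence of $H$ and the Krauss bound do all the work, and no explicit manipulation of formula (\ref{eq-newSchwarzian}) is needed. The main thing to verify carefully is the strict inequality $|\eps|<1$, which follows immediately from $f$ being sense-preserving.
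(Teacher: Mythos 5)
Your proposal is correct and follows essentially the same route as the paper: the paper also fixes $z_0$, uses the identity $S_f(z_0)=S(h-\overline{\omega(z_0)}g)(z_0)$ (via Lemma~\ref{lem-defalternativa}, which is the pointwise content behind Corollary~\ref{cor-relw/analyticpart}), notes that $h+\eps g$ is univalent by Clunie--Sheil-Small since $f$ is convex, and concludes with Krauss's bound. The only cosmetic difference is that you route the reduction through Corollary~\ref{cor-relw/analyticpart} rather than citing the lemma directly.
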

\begin{proof} Fix an arbitrary point $z_0\in\D$. Since $f$ is convex, the analytic functions $\varphi_\varepsilon=h+\varepsilon g$ are univalent for all $\varepsilon\in\overline{\D}$. In particular, for $\varepsilon=-\overline{\omega(z_0)}$, where $\omega$ is the dilatation of $f$. Therefore, using Lemma~\ref{lem-defalternativa} and Krauss' Theorem, we get
$$|S_f(z_0)|(1-|z_0|^2)^2=|S(h-\overline{\omega(z_0)}g)(z_0)|(1-|z_0|^2)^2\leq 6\,.$$
\end{proof}
\par\smallskip
We do not know if the constant $6$ in the last proposition is sharp. However, by considering the pre-Schwarzian norm instead of $\|S_f\|$, we can prove the following result.
\begin{thm}\label{thm-convexpres}
Let $f=h+\overline g$ be a convex harmonic mapping in the unit disk. Then,
$$\|P_f\|=\sup_{z\in\D} |P_f(z)|(1-|z|^2)\leq 5\,.$$
The constant $5$ is sharp.
\end{thm}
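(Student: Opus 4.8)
The plan is to reduce the computation of the norm at an arbitrary point to the single value $P_F(0)$ for a related convex harmonic mapping, and then to invoke the sharp second-coefficient estimate for convex harmonic mappings. Fix $z_0\in\D$ and let $\phi(\zeta)=(\zeta+z_0)/(1+\overline{z_0}\zeta)$ be the disk automorphism with $\phi(0)=z_0$. Since $\phi$ is a bijection of $\D$, the mapping $F:=f\circ\phi$ has the same (convex) image as $f$ and is univalent, so $F$ is again a convex harmonic mapping. Applying the chain rule $P_{f\circ\phi}=(P_f\circ\phi)\phi'+P\phi$ at the origin, together with $\phi'(0)=1-|z_0|^2$ and $P\phi(0)=-2\overline{z_0}$, gives $P_f(z_0)(1-|z_0|^2)=P_F(0)+2\overline{z_0}$, whence
\[
|P_f(z_0)|(1-|z_0|^2)\le |P_F(0)|+2|z_0|\le |P_F(0)|+2 .
\]
Thus everything reduces to the uniform bound $|P_F(0)|\le 3$ over all convex harmonic mappings $F$.

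To prove $|P_F(0)|\le 3$, I first normalize the dilatation at the origin. Writing $F=H+\overline G$ with dilatation $\omega_F$, the argument of Corollary~\ref{cor-relw/analyticpart} (based on Proposition~\ref{prop-afinne}) shows that $\widetilde F:=F-\overline{\omega_F(0)\,F}$ satisfies $P_{\widetilde F}(0)=P_F(0)$ and $\omega_{\widetilde F}(0)=0$. The passage from $F$ to $\widetilde F$ is the invertible real-affine map $w\mapsto w-\overline{\omega_F(0)}\,\overline w$ (invertible because $|\omega_F(0)|<1$), which preserves convexity of the image, so $\widetilde F$ is still convex harmonic. Since $\omega_{\widetilde F}(0)=0$, formula~(\ref{eq-preschw-jacobian}) collapses to $P_{\widetilde F}(0)=\widetilde H''(0)/\widetilde H'(0)$. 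After the harmless normalization $\widetilde H(0)=0$, $\widetilde H'(0)=1$ (which leaves $P$ unchanged and keeps $\widetilde F$ convex), $\widetilde F$ is a normalized convex harmonic mapping and $P_{\widetilde F}(0)=\widetilde H''(0)=2a_2$, where $a_2$ is the second Taylor coefficient of $\widetilde H$. The sharp Clunie--Sheil-Small estimate $|a_2|\le 3/2$ for convex harmonic mappings \cite{CSS} then yields $|P_F(0)|=2|a_2|\le 3$, and therefore $\|P_f\|\le 5$.

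For sharpness I would test the half-plane mapping $L$ of (\ref{eq-formulaL}), whose analytic part satisfies $h'=(1-z)^{-3}$ (so $Ph=3/(1-z)$) and whose dilatation is $\omega=-z$. Substituting into (\ref{eq-preschw-jacobian}) gives
\[
P_L(z)=\frac{3}{1-z}-\frac{\overline z}{1-|z|^2}.
\]
Evaluating on the real axis, $|P_L(r)|(1-r^2)=3+2r\to 5$ as $r\to 1^-$, so $\|P_L\|=5$ and the constant cannot be lowered.

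The main obstacle is precisely the inequality $|P_F(0)|\le 3$. The close-to-convexity of $\varphi_\varepsilon=H+\varepsilon G$ furnished by \cite[Theorem 5.7]{CSS} only delivers the univalent estimate $|a_2|\le 2$, hence the weaker conclusion $\|P_f\|\le 6$; in fact the naive route of bounding $|P_f(z)|(1-|z|^2)=|P(h-\overline{\omega(z)}g)(z)|(1-|z|^2)$ by the universal univalent bound $6$ loses exactly the factor we need. The improvement from $6$ to $5$ genuinely requires the stronger second-coefficient estimate available for \emph{convex} harmonic mappings, so correctly exploiting convexity (rather than mere close-to-convexity) at the origin is the crux of the argument.
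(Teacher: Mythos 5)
Your proof is correct and follows essentially the same route as the paper's: reduce to the origin via a disk automorphism and the chain rule (picking up the $2|z_0|$ term), use affine invariance of $P_f$ to normalize the dilatation at the origin so that $P_F(0)=H''(0)$, and invoke the sharp Clunie--Sheil-Small bound $|h''(0)|\le 3$ for the class $C_H^0$, with sharpness exhibited by the half-plane mapping $L$. The only differences are cosmetic: the paper additionally treats the constant-dilatation case separately via Yamashita's bound $\|Ph\|\le 4$ (which your general argument subsumes), and you spell out the normalization step in more detail.
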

\begin{proof} Without loss of generality, we can assume that $f$ is sense-preserving.
\par\smallskip
If the dilatation of $f$ is constant, then $f=ah+b \overline h+c$
for some convex analytic function $h$ and certain constants $a, b,$
and $c$, and we have $P_f=Ph$. In this case, a result due to
Yamashita \cite{Y} states that $\|Ph\|\leq 4$. Hence, the result
follows for convex harmonic functions with constant dilatations.
\par
To prove the general case, we first find the corresponding upper bound for $|P_f(0)|$.
\par
Using that $P_{(A\circ f)}=P_f$ for all affine mappings
$A(z)=z+\alpha\overline z$, we can suppose that $f\in C_H^0=\{f\in
S_H^0\colon f(\D)\emph{ is convex}\}$. It is known that if
$f=h+\overline g \in C_H^0$, then $|h''(0)|\leq 3$ (see \cite[p.
50]{Dur-Harm}). Hence, we obtain $|P_f(0)|=|Ph(0)|=|h''(0)|\leq 3$.
\par
The function $L$ defined by (\ref{eq-formulaL}) satisfies
$|P_L(0)|=3$.
\par
Now, fix an arbitrary point $z_0\in\D$ and take an automorphism
of the disk $\phi$ with $\phi(0)=z_0$. Then, the function $f\circ
\phi$ is convex and
\[
|P_f(z_0)|(1-|z_0|)=|P_{f\circ\psi}(0)-2\overline{z_0}|\leq
|P_{f\circ\phi}(0)|+2|z_0|\leq 5\,.
\]
Therefore, $\|P_f\|\leq 5$.
\par
It is easy to check that $\lim_{r\to 1^-}\|P_{L}(r)\|=5$. This proves that the constant $5$ is sharp.
\end{proof}
\par\medskip
Note that the last theorem is actually a special case of a more
general result. The properties we have used are that the family of
convex harmonic mappings is affine and linear invariant and that the
value of the supremum of $|h''(0)|$ among all functions $f\in C_H^0$
is known. Therefore, the theorem still holds for any subclass of
$S_H$ that is invariant under normalized affine transformations and
disk automorphisms.
\subsection{The Krauss theorem for harmonic mappings}\label{ssec-KraussHarmonic}

We prove that an analogue of the well-known theorem by Krauss holds for univalent harmonic mappings in the unit disk. To prove our result, we need to use Theorem~9 in \cite{SS} that states what follows: there exists a positive real constant $R$ such that if $f$ is a univalent harmonic mapping in $\D$ normalized by $h(0)=g(0)=1-h^\prime(0)=0$ (in other words, if $f\in S_H$), then $f_R(z)=f(Rz)$ is convex (and univalent) in the unit disk.

\begin{thm}\label{thm-kraussharmonic} There exists a positive constant $C$ such that
$\|S_f\|\leq C$ for all univalent harmonic mappings $f$ in the unit
disk.
\end{thm}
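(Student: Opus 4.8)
The plan is to reduce the arbitrary univalent case to the convex case treated in Proposition~\ref{prop-convex}, using the universal radius of convexity from \cite{SS} together with the affine and M\"obius invariance of $S_f$ and the chain rule (\ref{eq-chainruleharmonic}). Since $S_{\overline f}=S_f$, I may assume throughout that $f$ is sense-preserving.

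First, I fix an arbitrary $z_0\in\D$ and pick a disk automorphism $\phi$ with $\phi(0)=z_0$. By the identity recorded at the start of this section, $|S_f(z_0)|(1-|z_0|^2)^2=|S_{f\circ\phi}(0)|$, so it is enough to bound $|S_F(0)|$ for $F=f\circ\phi$, which is again a sense-preserving univalent harmonic mapping of $\D$.

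Second, I normalize. Writing $F=H+\overline G$, sense-preservation gives $H'(0)\neq 0$, and the affine change $\widetilde F=(F-F(0))/H'(0)$ (an affine harmonic map with $b=0$) satisfies $\widetilde H(0)=\widetilde G(0)=0$ and $\widetilde H'(0)=1$, so $\widetilde F\in S_H$; by Proposition~\ref{prop-afinne}, $S_{\widetilde F}(0)=S_F(0)$. Now I invoke Theorem~9 of \cite{SS}: there is a universal $R\in(0,1)$ with $\widetilde F_R(z)=\widetilde F(Rz)$ convex. Applying the chain rule (\ref{eq-chainruleharmonic}) with the analytic map $\varphi(z)=Rz$ (so $\varphi'=R$ and $S\varphi\equiv 0$) gives $S_{\widetilde F_R}(0)=R^2\,S_{\widetilde F}(0)$.

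Finally, since $\widetilde F_R$ is convex, Proposition~\ref{prop-convex} gives $|S_{\widetilde F_R}(0)|\le\|S_{\widetilde F_R}\|\le 6$, whence
\[
|S_f(z_0)|(1-|z_0|^2)^2=|S_F(0)|=|S_{\widetilde F}(0)|=\frac{|S_{\widetilde F_R}(0)|}{R^2}\le\frac{6}{R^2}.
\]
As $z_0$ is arbitrary, $\|S_f\|\le 6/R^2=:C$, which is the desired bound. The only genuinely nontrivial input is the uniform radius of convexity of \cite{SS}; granting that, the proof is a clean assembly of the invariance properties and the convex estimate, and the single point demanding care is that the passage to $S_H$ can be carried out by an affine map that leaves $S_f$ unchanged.
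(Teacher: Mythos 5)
Your proof is correct and follows essentially the same route as the paper: reduce to $S_H$ via affine invariance and the disk-automorphism identity, apply Sheil-Small's uniform radius of convexity to pass to a convex mapping, and combine the chain rule with the bound $\|S_f\|\le 6$ for convex harmonic mappings to get $C=6/R^2$. The only difference is cosmetic ordering of the normalization steps.
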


\begin{proof} By Proposition~\ref{prop-afinne}, we may assume that $f\in S_H$. Notice that if $f$ is univalent and $\phi$ is an
automorphism of the disk with $\phi(0)=z$, then $f\circ \phi$ is
also univalent and $|S_{(f\circ\phi)}(0)|=|S_f(z)|(1-|z|^2)^2$.
Hence, it is not difficult to prove that
\[
\sup_{f\in S_H} \|S_f\|=\sup_{f\in S_H} |S_f(0)|\,.
\]
\par
Now, fix an arbitrary function $f\in S_H$. According to \cite[Theorem 9]{SS}, $f_R(z)=f(Rz)$ is a univalent convex harmonic mapping. Therefore, $|S_{f_R}(0)|\leq 6$ by Proposition~\ref{prop-convex}. Using the chain rule, we see that
\[
R^2 |S_f(0)|=|S_{f_R}(0)|\leq 6\,.
\]
Therefore, $\sup_{f\in S_H} |S_f(0)|\leq 6/R^2$. This proves the theorem.
\end{proof}
\par
We end this section with two examples.
\par
\begin{ex} Recall that the analytic Koebe function $k$ has Schwarzian norm $\|Sk\|=6$ and that for each $-1<r<1$, $|Sk(r)|(1-r^2)^2=\|Sk\|$. The Schwarzian derivative of the harmonic Koebe function $K$ defined by (\ref{eq-Koebe-harmonic}) equals
\[
S_K(z)=-\frac{19+10z+3z^2-44|z|^2-10z|z|^2+19|z|^4-10\overline z+10\overline z|z|^2+3\overline z^2}{2(1-z^2)^2(1-|z|^2)^2}\,.
\]
A tedious calculation shows that
$$|S_K(z)|\leq \frac{19}{2}\cdot\frac{1}{(1-|z|^2)^2}\,.$$
Hence, $\|S_K\|=|S_K(0)|=19/2$. Notice that $|S_K(r)|(1-r^2)^2=\|S_K\|$ for all real numbers $r\in(-1,1)$.
\end{ex}
\begin{ex} The Schwarzian derivative of the harmonic function $K_2=h+\overline g$, where $h$ and $g$ are defined by (\ref{eq-formulaK2}), is equal to
$$S_{K_2}(z)=\frac{-2}{(1-z)^2(1-|z|^4)^2}\left(2+|z|^4+\overline z^2-6|z|^2\overline z+2\overline z^2|z|^4\right)\,.$$

It can be checked that
$$\|S_{K_2}\|=\lim_{r\to 1}|S_{K_2}(r)|(1-r^2)^2=19/2=\|S_K\|\,.$$
\end{ex}
\subsection{Harmonic mappings in the unit disk with finite Schwarzian norm}\label{ssec-finiteSchwarziannorm}
The following theorem asserts that a harmonic mapping has finite Schwarzian norm $\|S_f\|$ if and only if its analytic part does. That the same result holds for the Schwarzian norm $\|\mathbb{S}\|$ (restricted to the family $\mathcal{F}$ defined in the beginning of this section) was proved in \cite{Ch-D-O-07}. The proof of our theorem is similar to that in \cite{Ch-D-O-07} but, in this case, we need to prove that there exists a constant $C$ such that for all $z\in\D$
\begin{equation}\label{eq-thm-finiteSch3}
\left|\frac{\omega''(z)\omega(z)}{1-|\omega(z)|^2}\right|(1-|z|^2)^2\leq
C\,.
\end{equation}
The last inequality is a consequence of the results obtained in \cite{MSZ} using boundedness criteria for weighted composition operators between various Bloch-type spaces. We give a direct proof of (\ref{eq-thm-finiteSch3}).

\begin{lem}\label{lema-w''} Let $\omega:\D\to\D$ be an analytic function. Then, there exists a constant
$C$ such that (\ref{eq-thm-finiteSch3} holds).
\end{lem}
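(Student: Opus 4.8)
The plan is to reduce the estimate at an arbitrary point $z_0\in\D$ to the origin by pre- and post-composing $\omega$ with disk automorphisms, turning the problem into a coefficient estimate for a Schwarz function. First I would fix $z_0\in\D$, set $a=\omega(z_0)$, and introduce the automorphisms
\[
\phi(\zeta)=\frac{z_0+\zeta}{1+\overline{z_0}\,\zeta}\,,\qquad \psi(w)=\frac{w-a}{1-\overline a\,w}\,,
\]
so that $\phi(0)=z_0$ and $\psi(a)=0$. The composition $G=\psi\circ\omega\circ\phi$ then maps $\D$ into $\D$ and fixes the origin, $G(0)=0$. Writing its Taylor expansion as $G(\zeta)=c_1\zeta+c_2\zeta^2+\cdots$, the whole matter reduces to controlling $c_1$ and $c_2$ uniformly.

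The key estimate is the classical coefficient bound $|c_1|\leq 1$ and $|c_2|\leq 1-|c_1|^2$ for such Schwarz functions. I would obtain it by applying the Schwarz lemma to get $|G(\zeta)|\leq|\zeta|$, so that $g(\zeta)=G(\zeta)/\zeta$ extends to an analytic function mapping $\D$ into $\overline\D$; since $g(0)=c_1$ and $g'(0)=c_2$, the Schwarz--Pick inequality for $g$ at the origin yields exactly $|c_2|\leq 1-|c_1|^2$ (and $|c_1|\leq 1$).

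Next I would translate back to $\omega$. Using $\omega\circ\phi=\psi^{-1}\circ G$ and differentiating twice at $\zeta=0$, together with $\phi'(0)=1-|z_0|^2$ and $\phi''(0)=-2\overline{z_0}(1-|z_0|^2)$, a direct computation gives
\[
\omega''(z_0)(1-|z_0|^2)^2=2(1-|a|^2)\bigl(c_2-\overline a\,c_1^{\,2}+\overline{z_0}\,c_1\bigr)\,.
\]
Multiplying by $a/(1-|a|^2)=\omega(z_0)/(1-|\omega(z_0)|^2)$ and taking absolute values, the left-hand side of (\ref{eq-thm-finiteSch3}) equals $2|a|\,\bigl|c_2-\overline a c_1^{\,2}+\overline{z_0}c_1\bigr|$, and the coefficient bounds together with $|a|\leq 1$, $|z_0|\leq 1$ give
\[
2|a|\bigl(|c_2|+|a||c_1|^2+|z_0||c_1|\bigr)\leq 2\bigl((1-|c_1|^2)+|c_1|^2+|c_1|\bigr)=2(1+|c_1|)\leq 4\,,
\]
so that the constant $C=4$ works.

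The main obstacle is purely the bookkeeping in the chain-rule step that expresses $\omega''(z_0)(1-|z_0|^2)^2$ in terms of $c_1,c_2,a,z_0$: one must carefully carry the second derivatives of both $\phi$ and $\psi^{-1}\circ G$. The conceptual content, by contrast, is entirely supplied by the Schwarz-function inequality $|c_2|\leq 1-|c_1|^2$; once it is in hand the final bound is immediate, and indeed the extra factor $|\omega(z_0)|\leq|a|$ in the numerator is what keeps the quantity bounded even as $|\omega(z_0)|\to 1$.
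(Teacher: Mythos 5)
Your proof is correct, and it takes a genuinely different route from the paper's. The paper argues indirectly: it shears an arbitrary convex analytic map $\varphi$ horizontally with dilatation $\omega$, invokes Clunie--Sheil-Small to get a univalent harmonic $f$ with close-to-convex analytic part $h$, and then isolates the $\omega\omega''$ term from the definition of $S_f$ using the triangle inequality together with $\|S_f\|\leq C$ (Theorem~\ref{thm-kraussharmonic}), Kraus's bound for $\|Sh\|$, the Bieberbach-type bound for $|h''/h'|(1-|z|^2)$, and Schwarz--Pick for the $\omega\omega'$ term; the resulting constant is inexplicit because it inherits the unknown constant from Sheil-Small's covering theorem. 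You instead normalize by pre- and post-composition with automorphisms, reduce to the coefficient inequality $|c_2|\leq 1-|c_1|^2$ for a Schwarz function $G(\zeta)=c_1\zeta+c_2\zeta^2+\cdots$, and compute directly; I have checked the chain-rule identity
\[
\omega''(z_0)(1-|z_0|^2)^2=2\left(1-|a|^2\right)\left(c_2-\overline a\,c_1^{\,2}+\overline{z_0}\,c_1\right)
\]
and the final estimate, and both are right, yielding the explicit constant $C=4$. This is essentially a direct proof of the generalized Schwarz--Pick estimate that the paper attributes to MacCluer, Stroethoff, and Zhao but does not reproduce, so your argument is more elementary and self-contained than the paper's, and it buys an explicit (and quite small) constant, whereas the paper's route reuses machinery already developed for harmonic mappings. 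One pedantic point: when you apply Schwarz--Pick to $g(\zeta)=G(\zeta)/\zeta$ you should note that $g$ maps $\D$ into $\overline\D$, so either $g$ is a unimodular constant (in which case $c_2=0$ and $|c_1|=1$, and the inequality $|c_2|\leq 1-|c_1|^2$ holds trivially) or $g$ maps $\D$ into $\D$ by the maximum principle; likewise the case $\omega$ constant is trivial since then $\omega''\equiv 0$.
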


\begin{proof} Let $\varphi$ be any convex analytic mapping in the unit disk with $\varphi(0)=0$.
Consider the harmonic mapping $f$ constructed as the horizontal
shear of the function $\varphi$ with dilatation $\omega$. This is,
$f=h+\overline g$, where $h$ and $g$ are obtained by solving the
linear system of equations
\begin{eqnarray}
\nonumber \quad \left\{ \begin{array}{c} h(z)-g(z)=\varphi(z)  \\ \nonumber
g^\prime(z)/h^\prime(z)=\omega(z)  \end{array}\right. \quad
(z\in\D)\,,
\end{eqnarray}
with $h(0)=g(0)=0$.  By Theorem 5.17 in \cite{CSS}, since $\varphi$
is convex, the harmonic mapping $f$ is univalent and $h$ is
close-to-convex. Therefore, using Theorem~\ref{thm-kraussharmonic}
and the well-known results for univalent analytic functions, we have
that  $\|S_f\|$, $\|Sh\|$, and $|\left(h''/h'\right)(z)|(1-|z|^2)$
are bounded in the unit disk. By the Schwarz-Pick lemma, we obtain
that
$$\frac{|\omega(z)\omega'(z)|(1-|z|^2)}{1-|\omega(z)|^2}$$ is bounded by $1$ in $\D$.
\par
Using the definition of the Schwarzian derivative (\ref{eq-newSchwarzian}) and the triangle inequality, we see that
$$\frac{\left|\omega(z)\omega''(z)\right| (1-|z|^2)^2}{1-|\omega(z)|^2}\leq \|Sh\|+\|S_f\|+\left|\frac{h''}{h'}(z)\right|(1-|z|^2)+\frac 32\leq C<\infty\,.$$
\end{proof}
Now we have all the tools to prove the next theorem.
\begin{thm}\label{thm-finiteSchw} Let $f=h+\overline g$ be a locally univalent harmonic mapping in the unit disk. Then $\|S_f\|<\infty$ if and only if
$\|Sh\|<\infty$.
\end{thm}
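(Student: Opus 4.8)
The plan is to work directly from the explicit formula (\ref{eq-newSchwarzian}) and to show that the difference $S_f-Sh$ has finite Schwarzian norm as soon as the pre-Schwarzian norm $\|Ph\|$ of the analytic part is finite, and then to supply that finiteness under each of the two hypotheses. Subtracting $Sh$ in (\ref{eq-newSchwarzian}) leaves the three summands
\[
\frac{\overline\omega\,\omega'}{1-|\omega|^2}\cdot\frac{h''}{h'}\,,\qquad -\frac{\overline\omega\,\omega''}{1-|\omega|^2}\,,\qquad -\frac32\left(\frac{\omega'\,\overline\omega}{1-|\omega|^2}\right)^2\,.
\]
First I would dispose of the last two. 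The Schwarz--Pick lemma gives $|\omega'|(1-|z|^2)/(1-|\omega|^2)\le 1$, so the squared term contributes at most $3/2$ to the norm, while the middle term is precisely the quantity controlled by Lemma~\ref{lema-w''}, which yields $|\overline\omega\,\omega''/(1-|\omega|^2)|(1-|z|^2)^2\le C$. Hence these two pieces are bounded with no assumption on $h$.

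The first summand is the crux. Using Schwarz--Pick again, $|\overline\omega\,\omega'/(1-|\omega|^2)|(1-|z|^2)\le|\omega|\le 1$, so
\[
\left|\frac{\overline\omega\,\omega'}{1-|\omega|^2}\cdot\frac{h''}{h'}\right|(1-|z|^2)^2\le \left|\frac{h''}{h'}(z)\right|(1-|z|^2)=|Ph(z)|(1-|z|^2)\,.
\]
Consequently $|(S_f-Sh)(z)|(1-|z|^2)^2$ is bounded whenever $\|Ph\|<\infty$, and since by (\ref{eq-preschw-jacobian}) one has $|Ph|(1-|z|^2)\le |P_f|(1-|z|^2)+1$, it suffices to establish $\|Ph\|<\infty$ (equivalently $\|P_f\|<\infty$) under each hypothesis. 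Thus the whole theorem reduces to the two implications $\|Sh\|<\infty\Rightarrow\|Ph\|<\infty$ and $\|S_f\|<\infty\Rightarrow\|P_f\|<\infty$.

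Here I would invoke uniform local univalence. For the analytic part, finite $\|Sh\|$ forces $h$ to be univalent on every hyperbolic disk of a fixed radius: pre-composition with an automorphism $\phi_{z_0}$ leaves $\|S(h\circ\phi_{z_0})\|=\|Sh\|$ unchanged, and rescaling the argument to a sufficiently small radius brings the Schwarzian norm of the rescaled map below $2$, so Nehari's criterion (\ref{eq-Nehari}) applies. Univalence on a fixed hyperbolic disk bounds $|P(h\circ\phi_{z_0})(0)|$ through the classical coefficient estimate, and unwinding the chain rule $P(h\circ\phi_{z_0})(0)=Ph(z_0)(1-|z_0|^2)-2\overline{z_0}$ gives $\|Ph\|<\infty$. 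The hard part will be the harmonic counterpart needed for the converse, namely deducing $\|P_f\|<\infty$ from $\|S_f\|<\infty$; I would obtain this from the fact noted in the introduction that $\|S_f\|$ and $\|\mathbb{S}f\|$ are simultaneously finite, combined with the Chuaqui--Duren--Osgood characterization of uniform local univalence by finiteness of the Schwarzian norm, which yields uniform local univalence of $f$ and hence, by the same automorphism argument applied to the harmonic pre-Schwarzian (\ref{eq-preschw-jacobian}), the bound $\|P_f\|<\infty$. Feeding either conclusion back into the estimate for $S_f-Sh$ then closes both directions of the equivalence.
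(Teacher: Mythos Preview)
Your forward direction is fine and essentially the paper's: you isolate the three terms in $S_f-Sh$, dispatch two of them with Schwarz--Pick and Lemma~\ref{lema-w''}, and control the remaining one by $\|Ph\|$, which you then bound via uniform local univalence. The paper does the same, quoting Pommerenke's explicit inequality (\ref{eq-thm-finiteSch1}) in place of your Nehari argument; either works.

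The converse has a genuine gap. To pass from $\|S_f\|<\infty$ to $\|P_f\|<\infty$ you invoke the simultaneity of finiteness of $\|S_f\|$ and $\|\mathbb S f\|$ together with the Chuaqui--Duren--Osgood uniform local univalence criterion. But in this paper that simultaneity is a \emph{consequence} of Theorem~\ref{thm-finiteSchw} (both norms are shown to be equivalent to $\|Sh\|<\infty$), not an independent input; invoking it here is circular. Moreover, $\mathbb S f$ is only defined when $\omega$ is a perfect square, so even granting the circular step you would only cover that subclass. If you try instead to compare $\|\mathbb S f\|$ and $\|S_f\|$ directly from the two formulas, the difference again contains a factor $h''/h'$, and bounding it brings you back to needing $\|Ph\|<\infty$ --- exactly what you are trying to prove.

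The paper breaks this loop with a dilation argument you are missing: from your own estimate one has
\[
|Sh(z)|(1-|z|^2)^2\le \|S_f\|+\Bigl|\tfrac{h''}{h'}(z)\Bigr|(1-|z|^2)+C,
\]
and the obstacle is the $h''/h'$ term on the right. Applied to $f_r(z)=f(rz)$ this becomes an inequality for $h_r$, for which $\|Sh_r\|$ is \emph{a priori} finite; Pommerenke's bound (\ref{eq-thm-finiteSch1}) then yields $\|Sh_r\|-2\sqrt{1+\tfrac12\|Sh_r\|}\le \|S_f\|+C+2$, uniformly in $r$, and letting $r\to1$ gives $\|Sh\|<\infty$. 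Your scheme would work if you replaced the circular appeal to $\mathbb S f$ by this dilation trick (or any other self-contained way to obtain $\|Ph\|<\infty$ from $\|S_f\|<\infty$).
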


\begin{proof} As we mentioned before, since $S_{\overline f}=S_f$, there is no loss of generality if we assume that $f$ is sense-preserving. Suppose that $\|Sh\|<\infty$. By the triangle inequality, we see that
\[
|S_f|\leq |Sh|+\frac{|ww^\prime|}{1-|w|^2}\, \left|\frac{h^{\prime\prime}}{h^\prime}\right|+\frac{|ww^{\prime\prime}|}{1-|w|^2}+\frac 32 \left|\frac{w w^\prime}{1-|w|^2}\right|^2\,.
\]

By hypotheses, there exists a constant $C_1$ such that
\[
|Sh(z)|(1-|z|^2)^2\leq C_1\,,\quad z\in\D\,.
\]

Since $f$ is sense-preserving, its dilatation $\omega$ maps the unit disk into itself. Using the
Schwarz-Pick Lemma, we have that
\[
\frac{\left|\omega'(z)\omega(z)\right|(1-|z|^2)}{1-|\omega(z)|^2}\leq 1\,.
\]

By Lemma~\ref{lema-w''}, there exists a constant $C_2$ such that
$$\frac{\left|\omega(z)\omega''(z)\right|(1-|z|^2)^2}{1-|\omega(z)|^2}\leq C_2\,.$$

Finally, a result of Pommerenke (see \cite{Pom1}) asserts that
\begin{equation}\label{eq-thm-finiteSch1}
(1-|z|^2)\left|\frac{h''}{h'}(z)\right|\leq 2+2\sqrt{1+\frac
12\|Sh\|}\,.
\end{equation}
Putting all the estimates together, we obtain
\[
\|S_f\|\leq C_1+2\sqrt{1+\frac
12C_1}+C_2+\frac 72<\infty\,.
\]
Conversely, suppose that $\|S_f\|<\infty$. Using formula (\ref{eq-newSchwarzian}) and the preceding estimates we have
\begin{equation}\label{eq-thm-finiteSch2}
|Sh(z)|\leq |S_f(z)|+\frac{1}{1-|z|^2}\left|\frac{h^{\prime\prime}}{h^\prime}(z)\right|+\frac{C}{(1-|z|^2)^2}\,.
\end{equation}
In order to use (\ref{eq-thm-finiteSch1}), we apply inequality (\ref{eq-thm-finiteSch2}) to the dilation
 $f_r(z)=f(rz)=h_r+\overline{g_r}$, where $0<r<1$. By the chain rule, we have that $S_{f_r}(z)=r^2S_f(rz)$. Therefore,
$$|S_{f_r}(z)|(1-|z|^2)^2\leq |S_{f}(rz)|(1-|rz|^2)^2 \leq \|S_f\|\,.$$
Since $\|Sh_r\|$ is finite for each $r<1$, we can use (\ref{eq-thm-finiteSch1}) to get
$$\|Sh_r\|-2\sqrt{1+\frac
12\|Sh_r\|}\leq \|S_f\|+C+2\,.$$
Let $r\to 1$ to obtain $\|Sh\|<\infty$.
\end{proof}
\par
To conclude this section, we would like to mention explicitly
Theorem 3 from \cite{Ch-D-O-08}. This theorem states that given any
sense-preserving harmonic mapping $f=h+\overline g$ whose dilatation
$\omega=g^\prime/h^\prime$ is the square of an analytic function in
the unit disk, then $\|\mathbb{S}f\|<\infty$ if and only if $f$ is
uniformly locally univalent. The proof of this result uses that
$\|\mathbb{S}f\|<\infty$ if and only if $\|Sh\|<\infty$. The
requirement on the dilatation is only needed to define the
Schwarzian derivative $\mathbb{S}f$. By
Theorem~\ref{thm-finiteSchw}, we know that $\|Sh\|<\infty$ if and
only if $\|S_f\|<\infty$. Using this fact and the argument used in
the proof of \cite[Theorem 3]{Ch-D-O-08}, we obtain the next result
(which does not require any extra condition on the dilatation).
\begin{thm}
Let $f=h+\overline g$ be a sense-preserving harmonic mapping in the
unit disk. Then, $\|S_f\|<\infty$ if and only if $f$ is uniformly
locally univalent.
\end{thm}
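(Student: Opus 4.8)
The plan is to deduce the statement from Theorem~\ref{thm-finiteSchw} together with the corresponding result of Chuaqui, Duren, and Osgood, exploiting the observation that the only role played by their hypothesis $\omega=q^2$ is in the \emph{definition} of $\mathbb{S}f$, and not in the geometric characterization of uniform local univalence itself. Since $S_{\overline f}=S_f$ and $\overline f$ is uniformly locally univalent exactly when $f$ is, there is no loss of generality in assuming that $f$ is sense-preserving, so that its analytic part $h$ is locally univalent and its dilatation $\omega=g'/h'$ is an analytic self-map of $\D$.

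First I would recall that uniform local univalence of $f$ (with respect to the hyperbolic metric) means that there is a $\delta>0$ for which $f$ is univalent on every hyperbolic disk of radius $\delta$. The decisive reduction is that, by Theorem~\ref{thm-finiteSchw}, the condition $\|S_f\|<\infty$ is equivalent to $\|Sh\|<\infty$. Hence it suffices to prove that \emph{$f$ is uniformly locally univalent if and only if $\|Sh\|<\infty$}. This is precisely the content one extracts from \cite[Theorem~3]{Ch-D-O-08}: there it is shown (under the assumption $\omega=q^2$) that $f$ is uniformly locally univalent exactly when $\|\mathbb{S}f\|<\infty$, and the argument factors through the equivalence $\|\mathbb{S}f\|<\infty\Leftrightarrow\|Sh\|<\infty$, matching $\|Sh\|<\infty$ with the geometric condition.

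Concretely, for the implication $\|Sh\|<\infty\Rightarrow f$ uniformly locally univalent, I would transport the problem to a fixed neighborhood of the origin by pre-composing with a disk automorphism $\phi$ sending $0$ to an arbitrary $z_0$: the chain rule (with $S\phi\equiv 0$) gives $|S_{f\circ\phi}(0)|=|S_f(z_0)|(1-|z_0|^2)^2$, the dilatation of $f\circ\phi$ is again $\omega\circ\phi$, still an analytic self-map of $\D$, and the Schwarz--Pick lemma bounds it uniformly. A single quantitative univalence estimate on a fixed disk about the origin, depending only on the bounds for $|Sh|(1-|z|^2)^2$, for $|(h''/h')|(1-|z|^2)$ via Pommerenke's inequality (\ref{eq-thm-finiteSch1}), and on $|\omega|<1$, then yields univalence of $f$ on every hyperbolic disk of one fixed radius. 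For the converse, uniform local univalence of $f$ forces, after the same normalization, a uniform control of the relevant Taylor data of $h$ on small hyperbolic disks, hence $\|Sh\|<\infty$. Both implications are exactly those carried out in \cite{Ch-D-O-08}, and I would re-run their argument.

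The hard part will be verifying rigorously that the passage between uniform local univalence and $\|Sh\|<\infty$ in \cite{Ch-D-O-08} never secretly uses the factorization $\omega=q^2$. Their proof is organized around the Weierstrass--Enneper lift of $f$ to a minimal surface, which is only available when $\omega$ is a perfect square; so the genuine task is to confirm that this lift enters only as bookkeeping for $\mathbb{S}f$ and that every estimate linking the geometry of $f$ to $h$ and $\omega$ remains valid when stated directly in terms of $h$, $h''/h'$, and the Schwarz--Pick bound on $\omega$, without any reference to $q$. Once this is checked, combining $\|S_f\|<\infty\Leftrightarrow\|Sh\|<\infty$ from Theorem~\ref{thm-finiteSchw} with $\|Sh\|<\infty\Leftrightarrow f$ uniformly locally univalent completes the proof for all sense-preserving---and therefore all locally univalent---harmonic mappings in $\D$, with no hypothesis whatsoever on the dilatation.
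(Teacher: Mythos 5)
Your proposal matches the paper's own argument: the paper likewise reduces to the sense-preserving case, invokes Theorem~\ref{thm-finiteSchw} to replace $\|S_f\|<\infty$ by $\|Sh\|<\infty$, and then reruns the argument of \cite[Theorem 3]{Ch-D-O-08}, noting exactly as you do that the hypothesis $\omega=q^2$ there is needed only to define $\mathbb{S}f$ and not for the link between $\|Sh\|<\infty$ and uniform local univalence. Your added diligence about checking that the Weierstrass--Enneper lift plays no hidden role is a reasonable precaution but does not change the route.
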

\section{A Becker-type criterion of univalence}\label{sec-univalence}
We finish this paper with a criterion of univalence for
sense-preserving harmonic mappings.
\begin{thm}\label{thm-Becker1}
Let $f=h+\overline g$ be a sense-preserving harmonic function in the
unit disk with dilatation $\omega$. If for all $z\in \D$
\begin{equation}\label{eq-Becker1}
|z P_f(z)|+\frac{|z\omega^\prime(z)|}{1-|\omega(z)|^2}\leq
\frac{1}{1-|z|^2}\,,
\end{equation}
then $f$ is univalent. The constant $1$ is sharp.
\end{thm}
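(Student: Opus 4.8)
The plan is to adapt Becker's original Loewner-chain argument to the harmonic setting. The first thing I would record is that the hypothesis (\ref{eq-Becker1}) already forces the classical Becker condition on the analytic part $h$: since (\ref{eq-preschw-jacobian}) gives $h''/h'=P_f+\overline\omega\,\omega'/(1-|\omega|^2)$, the triangle inequality together with $|\omega|<1$ yields
\[
(1-|z|^2)\left|z\,\frac{h''}{h'}\right|\le (1-|z|^2)\left(|zP_f(z)|+\frac{|z\omega'(z)|}{1-|\omega(z)|^2}\right)\le 1\,.
\]
Hence $h$ satisfies Becker's univalence criterion and, in particular, embeds in the standard Loewner chain $h(z,t)=h(e^{-t}z)+(e^{t}-e^{-t})z\,h'(e^{-t}z)$ underlying the analytic theorem. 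The extra dilatation term in (\ref{eq-Becker1}) is exactly what should upgrade the univalence of $h$ to that of $f$.

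Building on this, I would construct a harmonic Loewner chain $f(z,t)=h(z,t)+\overline{g(z,t)}$ with $f(z,0)=f(z)$ by taking $h(z,t)$ as above and choosing $g(z,t)$ so that the dilatation of $f(\cdot,t)$ equals $\omega(e^{-t}z)$; concretely $g'(z,t)=\omega(e^{-t}z)\,h'(z,t)$ with $g(0,t)=0$. By construction every $f(\cdot,t)$ is sense-preserving, since its dilatation has modulus $<1$ throughout $\D$, and at $t=0$ one recovers $f$. The target is then the harmonic analogue of Pommerenke's criterion: to show that $f(z,t)$ is a genuine Loewner chain, so that the univalence of $f=f(\cdot,0)$ follows at once.

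The heart of the proof, and the step I expect to be the main obstacle, is to verify that the Herglotz-type function $p(z,t)$ determined by $\partial_t f = z\,\partial_z f\cdot p$ (suitably interpreted for the harmonic chain, with a matching equation for the anti-analytic part) has nonnegative real part for almost every $t$. Here the two summands of (\ref{eq-Becker1}) play distinct roles: the term $|zP_f|$ controls the contribution of $h''/h'$ exactly as in the classical Becker computation, while the dilatation term $|z\omega'|/(1-|\omega|^2)$ is precisely what is needed to dominate the cross-terms produced by the anti-analytic part $\overline{g(z,t)}$ and by the evolution of the dilatation $\omega(e^{-t}z)$. The delicate supporting points are to set up the correct differential characterization of harmonic Loewner chains, to establish the required $t$-regularity and the normalization $|a_1(t)|\to\infty$, and to confirm that the chain's dilatation stays strictly below $1$ so that sense-preservation is never lost.

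Finally, for sharpness I would specialize to $\omega\equiv 0$. Then $f=h$ is analytic, $P_f=Pf$, and (\ref{eq-Becker1}) reduces to the classical condition $|zPf(z)|(1-|z|^2)\le 1$. Since Becker and Pommerenke \cite{BP} proved that the constant $1$ cannot be enlarged in the analytic criterion, the same examples show that the constant $1$ in (\ref{eq-Becker1}) is sharp.
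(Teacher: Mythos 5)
Your opening reduction is exactly the paper's: from (\ref{eq-preschw-jacobian}) and $|\omega|<1$ you get $(1-|z|^2)\left|z\,h''/h'\right|\le 1$, so $h$ is univalent by the classical Becker criterion; your sharpness argument (specialize to $\omega\equiv 0$ and quote Becker--Pommerenke) is also the paper's. The problem is the middle step. You propose to upgrade univalence of $h$ to univalence of $f$ by embedding $f$ in a harmonic Loewner chain and verifying a Herglotz-type positivity condition, but you never set up the differential characterization of harmonic Loewner chains, never prove that such a chain forces univalence of $f(\cdot,0)$, and never carry out the positivity computation --- you explicitly flag all three as open obstacles. There is no off-the-shelf harmonic analogue of Pommerenke's subordination-chain criterion to cite, so as written the proposal restates the difficulty rather than resolving it; in particular it is not at all clear that the family $h(z,t)+\overline{g(z,t)}$ with $g'(z,t)=\omega(e^{-t}z)\,h'(z,t)$ consists of univalent maps, which at $t=0$ is the whole content of the theorem.

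The paper avoids Loewner chains entirely. It applies the hypothesis not only to $f$ but to every affine transform $f_a=f-\overline{af}$, $a\in\D$: by Proposition~\ref{prop-afinne}, $P_{f_a}=P_f$, and since the dilatation of $f_a$ is a disk automorphism composed with $\omega$, the quantity $|z\omega_a'(z)|/(1-|\omega_a(z)|^2)$ is unchanged. Hence each $f_a$ satisfies (\ref{eq-Becker1}), so by your own first step each analytic part $h+ag$ satisfies the classical Becker criterion and is univalent for all $a\in\D$; Hurwitz then gives univalence of $h+\lambda g$ for $|\lambda|=1$, and the stability theorem of \cite{HM} converts univalence of the analytic shears $h+\lambda g$ on the circle into univalence of the harmonic maps $h+\lambda\overline{g}$, in particular of $f$ itself. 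If you want to salvage your outline, this affine-invariance route is the missing idea: the dilatation term in (\ref{eq-Becker1}) is there precisely to make the hypothesis invariant under $f\mapsto f-\overline{af}$, not to control cross-terms in a chain equation.
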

Before proving Theorem~\ref{thm-Becker1}, we would like to remark
that when $\omega$ is equal to a constant $\alpha\in \D$, say, then
(\ref{eq-Becker1}) becomes
\[
\left|z\frac{h''}{h^\prime}(z)\right|\leq \frac{1}{1-|z|^2}\,,
\]
which implies that $h$ is univalent. Hence, $f$ (that is necessarily
of the form $h+\alpha \overline h$) is also univalent. In other
words, Theorem~\ref{thm-Becker1} generalizes the classical Becker's
criterion of univalence for analytic functions.
\par\medskip
\begin{pf}
Using (\ref{eq-preschw-jacobian}), the triangle inequality, that
$\omega$ maps the unit disk into itself, and assuming that
(\ref{eq-Becker1}) holds, we obtain
\begin{eqnarray}
\nonumber \left|z\frac{h''}{h^\prime}(z)\right|&\leq& |z
P_f(z)|+\frac{|z\overline{\omega(z)}\omega^\prime(z)|}{1-|\omega(z)|^2}\\ \nonumber
& \leq& |z P_f(z)|+\frac{|z\omega^\prime(z)|}{1-|\omega(z)|^2} \leq
\frac{1}{1-|z|^2}\,.
\end{eqnarray}
Therefore, $h$ is univalent by Becker's criterion.
\par
For an arbitrary $a\in\D$, consider the function $f_a=f-\overline{a
f}=h_a+\overline{g_a}$\,. The dilatation $\omega_a$ of $f_a$ equals
$\phi_a\circ \omega$ where $\phi_a$ is the automorphism of the disk
defined by (\ref{eq-automorphism}). Then, $P_{f_a}=P_f$ (by Proposition~\ref{prop-afinne}) and
\[
\frac{|z\omega_a^\prime(z)|}{1-|\omega_a(z)|^2}=\frac{|z\omega^\prime(z)|}{1-|\omega(z)|^2}\,.
\]
Thus, (\ref{eq-Becker1}) holds for $f_a$ as well. We conclude that
the analytic parts $h_a=h+ag$ of $f_a$ are univalent for all $a \in \D$. Using
Hurwitz's theorem (and that $f$ is sense-preserving), we get that
$h+\lambda g$ are univalent for all $|\lambda|=1$. This implies that
$f_\lambda=h+\lambda\overline g$ are univalent for all such
$\lambda$ (see \cite{HM}). In particular, we obtain that $f$ is
univalent.
\par
Since the constant $1$ is sharp for analytic functions, it follows
that this constant is sharp for the harmonic mappings too.
\end{pf}

\section*{Acknowledgements}
We would like to thank Professors Martin Chuaqui and Dragan
Vukoti\'c for their helpful comments that improved the clarity of
the exposition in this paper.



\begin{thebibliography}{10}

\bibitem{A} L. Ahlfors, Sufficient conditions for quasiconformal
extension, \emph{Ann. Math. Studies} \textbf{79} (1974), 23--29.

\bibitem{AW} L. Ahlfors and G. Weill, A uniqueness theorem for Beltrami equations, \emph{Proc. Amer. Math. Soc.} \textbf{13} (1962), 975--978.

\bibitem{B} J. Becker, L\"{o}wnersche differentialgleichung und quasikonform fortsetzbare schlichte functionen, \emph{J. Reine Angew. Math.}
\textbf{255} (1972) 23--43.

\bibitem{BP} J. Becker and Ch. Pommerenke, Schlichtheitskriterien und Jordangebiete, \emph{J. Reine Angew. Math.}
\textbf{354} (1984) 74--94.

\bibitem{CHDO-DefSchw}
M. Chuaqui, P. Duren, and B. Osgood, The Schwarzian derivative for
harmonic mappins, \emph{J. Anal. Math.} \textbf{91} (2003), 329--351.

\bibitem{Ch-D-O-04}
M. Chuaqui, P. Duren, and B. Osgood, Curvature properties of planar harmonic mappings,
\emph{Comput. Methods Funct. Theory} \textbf{4} (2004), 127--142.

\bibitem{Ch-D-O-05}
M. Chuaqui, P. Duren, and B. Osgood, Ellipses, near ellipses, and harmonic M\"{o}bius transformations,
\emph{Proc. Amer. Math. Soc.} \textbf{133} (2005), 2705--2710.

\bibitem{Ch-D-O-07}
M. Chuaqui, P. Duren, and B. Osgood, Univalence criteria for lift
harmonic mappings to minimal surface, \emph{J. Geom. Anal.} \textbf{17} (1) (2007),
49--74.

\bibitem{Ch-D-O-07Phill}
M. Chuaqui, P. Duren, and B. Osgood, Schwarzian derivative criteria for
valence of analytic and harmonic mappings, \emph{Math. Proc. Cambridge
Phil. Soc.} \textbf{143} (2007), 473--486.

\bibitem{Ch-D-O-08}
M. Chuaqui, P. Duren, and B. Osgood, Schwarzian derivative and uniform
local univalence, \emph{Comput. Methods Funct. Theory} \textbf{8} (2008), 21--34.

\bibitem{CSS}
J. Clunie and T. Sheil-Small, Harmonic univalent functions,
\emph{Ann. Acad. Sci. Fenn. Ser. A. I} \textbf{9} (1984), 3--25.

\bibitem{Dur-Univ}
P. Duren, \emph{Univalent Functions}, Springer-Verlag, New York, 1983.

\bibitem{Dur-Harm}
P. Duren, \emph{Harmonic Mappings in the Plane}, Cambridge University
Press, Cambridge, 2004.

\bibitem{DL}
P. Duren and O. Lehto, Schwarzian derivatives and homeomorphic
extensions, \emph{Ann. Acad. Sci. Fenn, Ser A I}, no. 477 (1970), 11
pp.

\bibitem{GK}
I. Graham and G. Kohr, \emph{Geometric function theory in one and
higher dimensions} Marcel Dekker Inc., New York, Basel, 2003.

\bibitem{HM}
R. Hern\'andez and M. J. Mart\'{\i}n, Stable geometric properties of
analytic and harmonic functions. Preprint. Available from
http://www.uam.es/mariaj.martin.
\bibitem{Krauss}
W. Krauss, \"{U}ber den Zusammenhang einiger Charakteristiken eines einfach zusammenh\"{a}ngenden Bereiches mit der Kreisabbildung, \emph{Mitt. Math. Sem. Giessen} \textbf{21} (1932), 1--28.

\bibitem{Lewy}
H. Lewy, On the non-vanishing of the jacobian in certain
one-to-one mappings, \emph{Bull. Amer. Math. Soc.} \textbf{42} (1936), 689--692.

\bibitem{MSZ}
B. D. MacCluer, K. Stroethoff, and R. Zhao, Generalized Schwarz-Pick estimates, \emph{Proc. Amer. Math. Soc.} \textbf{131} (2002), 593--599.

\bibitem{Nehari}
Z. Nehari, The Schwarzian derivative and schlicht functions, \emph{Bull. Amer. Math. Soc.} \textbf{55} (1949), 545--551.

\bibitem{OS}
B. Osgood and D. Stowe, The Schwarzian derivative and conformal
mappings of Riemannian manifolds, \emph{Duke Math. J.} \textbf{67} (1992), 57--99.

\bibitem{Pom1}
Ch. Pommerenke, Linear-invariante Familien analytischer Funktionen {I},
\emph{Math. Ann.} \textbf{155} (1964), 108--154.

\bibitem{P}
Ch. Pommerenke, \emph{Univalent Functions}, Vandenhoeck $\&$
Ruprecht, G\"{o}ttingen, 1975.

\bibitem{SS}
T. Sheil-Small, Constants for planar harmonic mappings, \emph{J. London Math. Soc.} \textbf{42} (1990), 237--248.

\bibitem{T}
H. Tamanoi, Higher Schwarzian operators and combinatorics of the
Schwarzian derivative, \emph{Math. Ann.} \textbf{305} (1996), 127--151.

\bibitem{Y}
S. Yamashita, Norm estimates for function starlike or convex of
order alpha, \emph{Hokkaido Math. J.} \textbf{28} (1999), 217-230.

\end{thebibliography}
\end{document}